\newtheorem{Proposition}{Proposition}
\newtheorem{Lemma}{Lemma}
\newtheorem{Theorem}{Theorem}
\newcommand{\Z}{\mathbb{Z}}
\newcommand{\Q}{\mathbb{Q}}
\newcommand{\M}{{\mathcal{M}}}
\DeclareMathOperator{\FZ}{\mathsf{FZ}}
\DeclareMathOperator{\SQ}{\mathsf{SQ}}
\DeclareMathOperator{\Aut}{Aut}
\begin{document}
\baselineskip=16pt

\begin{center}
{{\bf Relations in the tautological ring}}
\end{center}

\vspace{15pt}

\begin{center}
{{\em R. Pandharipande and A. Pixton}}
\end{center}

\vspace{15pt}

\begin{center}
{{\em October 2010}}
\end{center}

$$ $$

The notes below cover our series of three lectures at Humboldt University
in Berlin for the October conference {\em Intersection theory on moduli space}
 (organized by G. Farkas). 
The topic concerns relations among the
$\kappa$ classes in the tautological ring of the moduli space
of curves $\mathcal{M}_g$. After a discussion of classical constructions
ending in Theorem 1,
we derive an explicit set of relations from the moduli space
of stable quotients. In a series of steps, the stable quotient
relations are transformed to simpler and simpler forms.
The first step, Theorem 3, comes almost immediately from
the virtual geometry of the moduli space of stable quotients.
After a certain amount analysis, the simpler form of Proposition 3 is found.  
Our final result, Theorem 5, establishes
a previously conjectural set of tautological
relations proposed a decade ago by Faber-Zagier. 
A detailed presentation of the proof will appear in \cite{PP}.

\vspace{15pt}
\noindent {\bf A. Chern vanishing relations}
\vspace{15pt}

Faber's original relations in 
{\em Conjectural description of the tautological ring} \cite{Faber}
are obtained from a very simple geometric construction.
Let 
$$\pi: \mathcal{C} \rightarrow \mathcal{M}_g$$
be the universal curve over the moduli space,
and let 
$$\pi^d: \mathcal{C}^d \rightarrow \mathcal{M}_g $$
be the map associated to the $d^{th}$ 
fiber product of the universal curve.
For every point $[C,p_1,\ldots,p_d]\in\mathcal{C}^d$,
we have the restriction map
\begin{equation}\label{gy77}
H^0(C,\omega_C) \rightarrow H^0(C,\omega_C|_{p_1+\ldots+p_d})\ .
\end{equation}
Since the canonical bundle $\omega_C$ has degree $2g-2$,
the map \eqref{gy77} is injective if $d>2g-2$.
Over the moduli space $\mathcal{C}^d$, we obtain 
the exact sequence
$$ 0 \rightarrow \mathbb{E} \rightarrow \Omega_d
\rightarrow Q \rightarrow 0$$
where $\mathbb{E}$ is the rank $g$ Hodge bundle, $\Omega_d$
is the rank $d>2g-2$
bundle with fiber $H^0(C,\omega_C|_{p_1+\ldots+p_d})$,
and $Q$ is the quotient
bundle of rank $d-g$.
Hence,
$$c_k(Q) = 0\in A^k(\mathcal{C}^d)\ \ \ \text{for} \ \ \ k>d-g \ .$$
After cutting such vanishing $c_k(Q)$ down with
cotangent line and diagonal classes
in $\mathcal{C}^d$ and 
pushing-forward via $\pi^d_*$ to $\M_g$,
we arrive at Faber's relations in $R^*(\M_g)$. 

From our point of view,
at the center of 
Faber's relations in
{\em Conjectural description of the tautological ring} \cite{Faber}
is the function
$$\Theta(t,x) = \sum_{d=0}^\infty \prod_{i=1}^d {(1+it)} \ 
\frac {(-1)^d}{d!} \frac{x^d}{t^{d}} \ .$$
The differential equation
$$ t(x+1)\frac{d}{dx} \Theta + (t+1) \Theta  = 0 \  $$
is easily found.
Hence, we obtain the following result.

\begin{Lemma}
$\Theta = (1+x)^{-\frac{t+1}{t}}\ .$
\end{Lemma}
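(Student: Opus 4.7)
My plan is to treat the stated differential equation as a standard first-order linear ODE in the variable $x$ (with $t$ a parameter) and integrate it. Since the excerpt asserts the equation
$$ t(x+1)\frac{d}{dx}\Theta + (t+1)\Theta = 0 $$
is ``easily found'' from the series definition, I would take this as given and concentrate on showing that the claimed closed form is the unique solution matching the appropriate initial condition.

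First I would rewrite the ODE in separated form,
$$ \frac{d\Theta}{\Theta} = -\frac{t+1}{t}\cdot\frac{dx}{x+1}, $$
and integrate both sides to obtain $\log\Theta = -\frac{t+1}{t}\log(1+x) + C(t)$, i.e.\ $\Theta = K(t)\,(1+x)^{-\frac{t+1}{t}}$ for some function $K(t)$ independent of $x$. Next I would pin down $K(t)$ by evaluating at $x=0$: in the series definition of $\Theta$, every term with $d\geq 1$ carries a positive power of $x$, so only the $d=0$ term survives and $\Theta(t,0)=1$. Since $(1+x)^{-\frac{t+1}{t}}$ also takes the value $1$ at $x=0$, we conclude $K(t)=1$, yielding exactly $\Theta = (1+x)^{-\frac{t+1}{t}}$.

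There is no real obstacle here: the only subtlety is the interpretation of $(1+x)^{-\frac{t+1}{t}}$ as a formal power series in $x$ with coefficients in $\mathbb{Q}(t)$ (equivalently, a Laurent series in $t$), matching the formal nature of $\Theta$. Uniqueness of the solution in this formal setting is immediate because the ODE determines each coefficient of $x^d$ recursively from the coefficient of $x^{d-1}$, starting from $\Theta(t,0)=1$; so once we have exhibited $(1+x)^{-\frac{t+1}{t}}$ as a solution with the correct constant term, it must agree with $\Theta$ term by term.
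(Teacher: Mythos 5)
Your proposal is correct and follows essentially the same route as the paper, which simply derives the closed form from the stated differential equation (the paper's entire argument is ``Hence, we obtain the following result''). Your added details --- integrating the separated ODE, fixing the constant via $\Theta(t,0)=1$, and noting formal uniqueness --- are exactly the steps the paper leaves implicit.
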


We introduce a variable set $\mathbf{z}$ indexed
by pairs of integers
$$\mathbf{z} = \{ \ {z}_{i,j} \ | \ i \geq 1, \ \ j\geq i-1 \  \} \ .$$
For monomials
$$\mathbf{z}^\sigma = \prod_{i,j} z_{i,j}^{\sigma_{i,j}},$$
 we define
$$\ell(\sigma) = \sum_{i,j} i \sigma_{i,j}, \ \ \
|\sigma| = \sum_{i,j} j \sigma_{i,j} \ .$$
Of course $|\text{Aut}(\sigma)| = \prod_{i,j} \sigma_{i,j} !$ \ .

The variables $\mathbf{z}$ are used to define
a differential operator
$$ \mathcal{D} = \sum_{i,j}  z_{i,j}\  t^j  \left( 
x\frac{d}{dx}\right) ^i\ .$$
After applying $\exp(\mathcal{D})$ to $\Theta$, we obtain
\begin{eqnarray*}
\Theta^{\mathcal{D}} & = &  \exp(\mathcal{D})\  \Theta \\
& = & 
\sum_\sigma \sum_{d=0}^\infty  
\prod_{i=1}^d {(1+it)} \ 
\frac {(-1)^d}{d!}  \frac{x^d}{t^{d}}\  
\frac{d^{\ell(\sigma)} t^{|\sigma|}
{\mathbf{z}}^{\sigma}}
{|\text{Aut}(\sigma)|}
\end{eqnarray*}
where $\sigma$ runs over all monomials in the
variables $\mathbf{z}$. Define
constants $C^r_d(\sigma)$ by the formula
$$\log(\Theta^{\mathcal{D}})= 
\sum_{\sigma}
\sum_{d=1}^\infty \sum_{r=-1}^\infty C^r_{d}(\sigma)\ t^r \frac{x^d}{d!}
\mathbf{z}^\sigma
\ .$$
By an elementary application of Wick, the
$t$ dependence of $\log(\Theta^{\mathcal{D}})$
has at most simple poles.

Finally, we consider the following function,
$$\gamma=  \sum_{i\geq 1} \frac{B_{2i}}{2i(2i-1)} \kappa_{2i-1} t^{2i-1}
+ 
\sum_{\sigma}
\sum_{d=1}^\infty \sum_{r=-1}^\infty C^r_d(\sigma)
\ \kappa_r t^r \frac{x^d}{d!}
\mathbf{z}^\sigma
\ .
$$
Denote the $t^rx^d \mathbf{z}^\sigma$ coefficient of $\exp(-\gamma)$ by
$$\big[ \exp(-\gamma) \big]_{t^rx^d \mathbf{z}^\sigma} 
\in \mathbb{Q}[\kappa_{-1},
\kappa_0,\kappa_1,
\kappa_2, \ldots] \ .$$
Our form of Faber's equations is the following result.

\begin{Theorem}
In $R^r(\M_g)$, the relation
$$
\big[ \exp(-\gamma) \big]_{t^rx^d \mathbf{z}^\sigma}  = 0$$
holds when $r>-g+|\sigma|$ and $d>2g-2$.
\end{Theorem}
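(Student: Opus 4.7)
\noindent\emph{Proof proposal.}
The plan is to derive the theorem from the single Chern vanishing $c_k(Q)=0\in A^k(\mathcal{C}^d)$, valid for $k>d-g$, of the construction above: we multiply by decorated classes on $\mathcal{C}^d$, push forward to $\M_g$ via $\pi^d_\ast$, and then package the resulting $\kappa$-relations into the generating-function form of the statement.

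I first split the input via the exact sequence:
$$c(Q)\;=\;c(\Omega_d)\cdot c(\mathbb{E})^{-1}.$$
The Hodge factor is pulled back from $\M_g$ and so emerges unchanged from the pushforward. On open $\M_g$, Mumford's formula gives $\ch_{2i-1}(\mathbb{E})=B_{2i}\kappa_{2i-1}/(2i)!$ and $\ch_{2i}(\mathbb{E})=0$; combined with the Newton identity $\log c(E)=\sum_{k\geq 1}(-1)^{k-1}(k-1)!\,\ch_k(E)$ this yields $\log c(\mathbb{E})=\sum_{i\geq 1}\frac{B_{2i}}{2i(2i-1)}\kappa_{2i-1}t^{2i-1}$, exactly the first summand of $\gamma$ (with $t$ tracking codimension). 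Thus $c(\mathbb{E})^{-1}$ contributes one factor of $\exp(-\gamma)$ directly.

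The remaining input $c(\Omega_d)$ must be combined with all decorations coming from cotangent line classes and partial diagonals on $\mathcal{C}^d$. I would set up the dictionary in which each variable $z_{i,j}$ corresponds to restricting $i$ of the $d$ marked points to coincide and decorating the resulting cluster point by the $j$-th power of its cotangent line: the partial diagonal contributes codimension $i-1$ and the cotangent line a further $j-(i-1)$, explaining the constraint $j\geq i-1$ and giving total codimension $|\sigma|$ across a monomial $\mathbf{z}^\sigma$. The factor $|\Aut(\sigma)|$ handles permutations among clusters of identical type. Since $\pi^d_\ast$ drops codimension by $d$, the upstairs bound $k>d-g$ translates into $r>-g+|\sigma|$ via $r=k-d+|\sigma|$. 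The Lemma now supplies the closed form $\Theta=(1+x)^{-(t+1)/t}$ which packages the Chern-polynomial product $\prod_{i=1}^d(1+it)$, with $x^d/d!$ tracking the symmetric-group quotient on the $d$ marked points. Applying $\exp(\mathcal{D})$ implements all $\mathbf{z}$-decorations simultaneously: the operator $(xd/dx)^i$ acting on $x^d$ produces the weight $d^{\ell(\sigma)}$ accounting for the $i$-cluster insertion data, and $t^j$ records the cotangent-line power.

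The final combinatorial step is the passage to $\log\Theta^{\mathcal{D}}$ via Wick's theorem: this converts the sum over all (possibly disconnected) configurations of clusters into a sum over connected clusters, which is exactly what the $\kappa$-packaging demands, and explains simultaneously why the $t$-dependence of $\log\Theta^{\mathcal{D}}$ has only simple poles. The hardest part of the proof is then verifying that the resulting Wick coefficients $C^r_d(\sigma)$ agree termwise with the $\psi$-class intersection numbers obtained by pushing decorated partial diagonals on $\mathcal{C}^d$ down to $\M_g$ --- with all signs, symmetry factors, and the interplay between $c(\Omega_d)$ and $c(\mathbb{E})^{-1}$ correctly aligned. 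Once this matching is in hand, extracting the $t^rx^d\mathbf{z}^\sigma$-coefficient of $\exp(-\gamma)$ is equivalent to the original Chern vanishing, proving the stated relation in $R^r(\M_g)$.
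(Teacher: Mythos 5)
Your proposal follows essentially the same route as the paper: Faber's construction via the vanishing $c_k(Q)=0$ for $k>d-g$ (with $d>2g-2$ ensuring $Q$ is a bundle), the splitting $c(Q)=c(\Omega_d)\,c(\mathbb{E})^{-1}$ with Mumford's formula supplying the Bernoulli summand of $\gamma$, the dictionary between the variables $z_{i,j}$ and the diagonal/cotangent decorations, and the log/exp packaging through $\Theta^{\mathcal{D}}$ and $\exp(-\gamma)$, with the codimension bookkeeping $r=k+|\sigma|-d$ yielding exactly the bound $r>-g+|\sigma|$. The termwise matching of the Wick coefficients $C^r_d(\sigma)$ with the pushforwards of decorated diagonals, which you defer, is likewise left implicit in these notes (the detailed argument being deferred to \cite{PP}), so your sketch is at the same level of completeness as the paper's own.
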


In the tautological ring $R^*(\M_g)$, the conventions
$$\kappa_{-1}=0, \ \ \ \ \kappa_{0}=2g-2$$ will
always be followed. For fixed $g$ and $r$, Theorem 1
provides infinitely many relations by increasing $d$.

While the proof of Theorem 1 is appealingly simple, the relations
do not seem to fit the other forms we will see later.
The variables $z_{i,j}$ efficiently encode both
the cotangent and diagonal operations studied
in {\em Conjectural description of the tautological
ring} \cite{Faber}. In particular, the relations of Theorem 1 are equivalent
to the mixing of all cotangent and diagonal operations
studied there.

\pagebreak

\vspace{10pt}
\noindent {\bf B. Stable quotient relations}
\vspace{10pt}

\noindent I. {\em The function $\Phi$.}
\vspace{10pt}

The relations in the tautological ring  $R^*(\M_g)$
obtained from {\em Moduli of stable quotients} \cite{MOP} are based on the function
$$\Phi(t,x) = \sum_{d=0}^\infty \prod_{i=1}^d \frac{1}{1-it} \ 
\frac {(-1)^d}{d!} \frac{x^d}{t^{d}} \ .$$
Define the coefficients $C^r_{d}$ by the logarithm,
$$\log(\Phi)= \sum_{d=1}^\infty \sum_{r=-1}^\infty C^r_{d} t^r \frac{x^d}{d!}
\ .$$
By an elementary application of Wick, the
$t$ dependence has at most a simple pole.
Let
$$\gamma=  \sum_{i\geq 1} \frac{B_{2i}}{2i(2i-1)} \kappa_{2i-1} t^{2i-1}
+ 
\sum_{d=1}^\infty \sum_{r=-1}^\infty C^r_d \kappa_r t^r \frac{x^d}{d!}\ .
$$
Denote the $t^rx^d$ coefficient of $\exp(-\gamma)$ by
$$\big[ \exp(-\gamma) \big]_{t^rx^d} \in \mathbb{Q}[\kappa_{-1},
\kappa_0,\kappa_1,
\kappa_2, \ldots] \ .$$
In fact, $[ \exp(-\gamma)]_{t^rx^d}$ is homogeneous of degree $r$
in the $\kappa$ classes.
The first tautological relations of {\em Moduli space
of stable quotients} \cite{MOP} are given by the following result.

\begin{Theorem} \label{vtw}
In $R^r(\M_g)$, the relation
\begin{equation*}
\big[ \exp(-\gamma) \big]_{t^rx^d} =0  
\end{equation*}
holds when $g-2d-1< r$ and 
$g\equiv r+1 \hspace{-5pt} \mod 2$.
\end{Theorem}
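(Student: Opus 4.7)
The plan is to deduce Theorem~\ref{vtw} from virtual $\mathbb{C}^*$-localization on the moduli space of stable quotients $\overline{Q}_g(\mathbb{P}^1,d)$ of \cite{MOP}, exploiting the polynomiality in the equivariant parameter $t$ of a suitable non-equivariant push-forward. Let $\pi: \overline{Q}_g(\mathbb{P}^1,d) \to \overline{\M}_g$ denote the forgetful morphism. With the standard $\mathbb{C}^*$-action on $\mathbb{P}^1$ fixing $\{0,\infty\}$ with opposite weights $\pm t$, the equivariant push-forward $\pi_*([\overline{Q}_g(\mathbb{P}^1,d)]^{vir}_T)$ is a polynomial in $t$ with coefficients in $H^*(\overline{\M}_g)$, while virtual localization presents it as a sum over fixed loci with a priori Laurent behavior in $t$. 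Cancellation of the singular part is the source of the tautological relations.

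I would then evaluate each side. A key advantage of stable quotients over stable maps is that the $\mathbb{C}^*$-fixed loci on $\overline{Q}_g(\mathbb{P}^1,d)$ are much cleaner: they are indexed simply by degree splittings $d = d_0 + d_\infty$ corresponding to torsion of the quotient supported over $0$ and $\infty$, with no ``bubbled'' rational-tail components. Each fixed locus is a symmetric product bundle over $\overline{\M}_g$. The Euler class of the virtual normal bundle contributes the factor $\prod_{i=1}^{d_0}(1-it)\cdot\prod_{i=1}^{d_\infty}(1+it)$ in its denominator, which is precisely the generating-function shape of $\Phi(t,x)$ and $\Phi(-t,x)$. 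The Hodge-bundle contribution, processed via Mumford's Grothendieck--Riemann--Roch formula, produces $\exp\Bigl(\sum_{i\geq 1}\tfrac{B_{2i}}{2i(2i-1)}\kappa_{2i-1}t^{2i-1}\Bigr)$, which is the Bernoulli summand of $\gamma$. The ``at most simple pole'' Wick identity for $\log\Phi$ then packages the remaining factors into $\exp(-\gamma)$, with the formal variable $x$ summing over degrees $d$. The involution $t \leftrightarrow -t$ swapping $\{0,\infty\}$ forces the parity constraint $g \equiv r+1 \pmod 2$ by making coefficients of the opposite parity vanish identically.

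The degree inequality $r > g - 2d - 1$ is a codimension count: the non-equivariant push-forward $\pi_*([\overline{Q}]^{vir})$ lives in $A^{g - 2d - 1}(\overline{\M}_g)$ (comparing the virtual dimension $2g - 2 + 2d$ of $\overline{Q}$ with $\dim \overline{\M}_g = 3g - 3$), so in the equivariant expansion $\pi_*([\overline{Q}]^{vir}_T) = \sum_r t^r \alpha_r$ the class $\alpha_r$ sits in cohomological codimension $g - 2d - 1 - r$, which is vacuous for $r > g - 2d - 1$. The main obstacle I anticipate is identifying the localization output exactly as the stated combinatorial expression $[\exp(-\gamma)]_{t^r x^d}$: although the broad structure (a normal-bundle factor producing $\Phi$ and an exponential of Hodge/$\kappa$ terms) is expected, verifying that the simple-pole cancellation in $\log\Phi$ meshes correctly with the Bernoulli exponential and that the coefficients $C^r_d$ emerge without renormalization requires careful bookkeeping of equivariant weights at each fixed locus and of the Mumford identities for Chern characters of the Hodge bundle.
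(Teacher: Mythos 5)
Your overall strategy is the one the paper follows: these relations come from $\mathbb{C}^*$-localization on the moduli of stable quotients $\overline{Q}_g(\proj^1,d)$, the inequality $r>g-2d-1$ is exactly the codimension count you give (virtual dimension $2g-2+2d$ against $\dim\overline{\M}_g=3g-3$), and the parity condition arises from the $0\leftrightarrow\infty$ symmetry. In the paper this is packaged as Theorem 3 --- an identity between the contributions of the two fixed points for general insertions $\prod_i\pi_*(s\,\omega^{b_i})$ --- and Theorem 2 is recovered as the $\sigma=\emptyset$ specialization, where the identity reads $[\exp(-\gamma)]_{t^rx^d}=(-1)^{g+r}[\exp(-\gamma)]_{t^rx^d}$, nontrivial precisely when $g\equiv r+1 \bmod 2$. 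That is your parity mechanism made precise; the detailed localization bookkeeping is deferred to \cite{MOP} and \cite{PP}.

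Two points in your sketch need repair. First, it is not true that every $\mathbb{C}^*$-fixed locus of $\overline{Q}_g(\proj^1,d)$ is a symmetric-product bundle over $\overline{\M}_g$: stable quotients do eliminate degree-carrying rational tails, but there remain fixed loci whose domain curve has components lying over $0$ and others over $\infty$, joined at nodes. These push forward to boundary classes of $\overline{\M}_g$, and the argument only closes after restricting to the open locus $\M_g$, where such contributions die; this restriction is the reason the relation is asserted only in $R^*(\M_g)$, and it is missing from your write-up. Second, you correctly flag that identifying the surviving contributions with $[\exp(-\gamma)]_{t^rx^d}$ --- the $\prod(1-it)^{-1}$ normal-bundle factors, the Mumford/Bernoulli exponential from the Hodge bundle, and the sign $(-1)^g$ that determines which parity survives --- is the real content. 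Since the paper itself only cites \cite{MOP} and \cite{PP} for this computation, your proposal should be regarded as an outline of the correct proof rather than a complete one.
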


For fixed $r$ and $d$,
if Theorem \ref{vtw} applies in genus $g$, 
then Theorem \ref{vtw} applies in genera $h=g-2\delta$ for all
natural numbers
$\delta\in \mathbb{N}$.
The genus shifting mod 2 property will
also be present in the Faber-Zagier conjecture
discussed later.
\pagebreak

\noindent II. {\em Partitions, differential operators, and logs.}
\vspace{10pt}

We will write partitions $\sigma$ as $(1^{a_1}2^{a_2}3^{a_3}\ldots)$
with
$$\ell(\sigma)= \sum_{i} a_i \ \ \ \ \text{and} \ \ \ \
|\sigma|= \sum_i ia_i\ .$$
The empty partition $\emptyset$ corresponding to
$(1^{0}2^{0}3^{0}\ldots)$
is permitted. In all cases, we have
$$|{\text{Aut}}(\sigma)|= a_1!a_2!a_3! \cdots \ .$$

Consider the infinite set of  variables
$p_1, p_2, p_3, \ldots \ .$
Monomials in the $p_i$ correspond to partitions
$$p_1^{a_1}p_2^{a_2}p_3^{a_3} \ldots     \ \ \leftrightarrow \ \  
(1^{a_1}2^{a_2}3^{a_3}\ldots)
 \ .$$
Given a partition $\sigma$, let $\mathbf{p}^\sigma$
denote the corresponding monomial.
Let
$$\Phi^{\mathbf{p}}(t,x) = 
\sum_\sigma \sum_{d=0}^\infty  
\prod_{i=1}^d \frac{1}{1-it} \ 
\frac {(-1)^d}{d!}  \frac{x^d}{t^{d}}\  
\frac{d^{\ell(\sigma)} t^{|\sigma|}
{\mathbf{p}}^{\sigma}}
{|\text{Aut}(\sigma)|}
\ $$
where the first sum is over all partitions $\sigma$.
The summand corresponding to the empty partition
equals $\Phi(t,x)$.

The function $\Phi^{\mathbf{p}}$ is easily obtained
from $\Phi$,
$$\Phi^{\mathbf{p}}(t,x) = \exp\left( \sum_{i=1}^\infty p_it^i x\frac{d}{dx}
\right)
\ \Phi(t,x)\ . $$
Let $D$ denote the differential operator
$$D= \sum_{i=1}^\infty p_it^i x\frac{d}{dx}\ .$$
Expanding the exponential of $D$, we obtain
\begin{eqnarray}
\Phi^{\mathbf{p}}& = & \Phi + D\Phi + \frac{1}{2} D^2\Phi+
\frac{1}{6} D^3 \Phi+\ldots \label{vfrr}
\\
& =& \nonumber
\Phi \left(1+\frac{D \Phi}{\Phi} +
\frac{1}{2} \frac{D^2 \Phi}{\Phi} + 
\frac{1}{6}\frac{D^3\Phi}{\Phi}+
\ldots\right) \ .
\end{eqnarray}
Let $\gamma^* = \log (\Phi)$ be the logarithm, 
 $$D\gamma^* = \frac{D\Phi}{\Phi}\ .$$
After applying the logarithm to \eqref{vfrr}, we see
\begin{eqnarray*}
\log(\Phi^{\mathbf{p}}) & =&
\gamma^* +\log\left(
 1+ D \gamma^* +  \frac{1}{2}( D^2\gamma^* + (D\gamma^*)^2)+ \ ... \right) \\
& = & \gamma^* + D\gamma^* + \frac{1}{2} D^2 \gamma^* + \ldots
\end{eqnarray*}
where the dots stand for a universal expression in
the $D^k\gamma^*$.
In fact, a remarkable simplification occurs,
$$\log(\Phi^{\mathbf{p}}) = 
\exp\left( \sum_{i=1}^\infty p_it^i x\frac{d}{dx}
\right) \ \gamma^*\ .$$
The result follows from a general identity.

\begin{Proposition}
If $f$ is   a function of $x$, then
$$\log\left(\exp\left(\lambda x\frac{d}{dx}\right) \ f \right) =
\exp\left(\lambda x\frac{d}{dx}\right) \ \log(f)\ .$$
\end{Proposition}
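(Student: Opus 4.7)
The plan is to identify $\exp(\lambda x\frac{d}{dx})$ with the multiplicative dilation $x\mapsto e^\lambda x$; once this is done, the proposition becomes an essentially trivial consequence of $\log\circ \exp = \mathrm{id}$.

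The key computation is on monomials. Since $x\frac{d}{dx}$ has $x^n$ as an eigenvector with eigenvalue $n$, we have $(x\frac{d}{dx})^k x^n = n^k x^n$, and therefore
$$\exp\left(\lambda x\frac{d}{dx}\right) x^n \ = \ \sum_{k\geq 0} \frac{\lambda^k n^k}{k!}\, x^n \ = \ e^{n\lambda} x^n \ = \ (e^\lambda x)^n.$$
Extending by linearity to formal power series, the operator $\exp(\lambda x\frac{d}{dx})$ is precisely the substitution $f(x)\mapsto f(e^\lambda x)$.

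Granting this, the identity we wish to prove is
$$\log\bigl(f(e^\lambda x)\bigr) \ = \ (\log f)(e^\lambda x),$$
which merely says that substitution in $x$ commutes with formal logarithm. Writing $h=\log f$, so $f=\exp(h)$, we get $f(e^\lambda x)=\exp\bigl(h(e^\lambda x)\bigr)$, and taking logarithms yields the claim.

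The only point requiring care is the formal setting. In the application $\lambda$ is replaced by $\sum_{i\geq 1} p_i t^i$, a series with no constant term, so $e^\lambda$ has constant term $1$ and the substitution $x\mapsto e^\lambda x$ is a well-defined endomorphism of $\mathbb{Q}[[p_1,p_2,\ldots,t]][[x]]$; one needs $f$ to admit a logarithm, which in the relevant case $f=\Phi$ is guaranteed by $\Phi(t,0)=1$. Thus there is no real obstacle: the proposition reduces to the fact that the one-parameter group generated by the Euler vector field $x\frac{d}{dx}$ is multiplicative scaling, combined with the functional equation $\log\circ\exp=\mathrm{id}$.
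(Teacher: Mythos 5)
Your proof is correct and follows the same route as the paper: compute $\exp(\lambda x\frac{d}{dx})x^k=(e^\lambda x)^k$ on monomials, conclude the operator is the substitution $x\mapsto e^\lambda x$, and observe that substitution commutes with the logarithm. Your extra remarks on the formal power series setting are a welcome addition but not a different argument.
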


\begin{proof}
A simple computation for monomials in $x$ shows
$$\exp\left(\lambda x\frac{d}{dx}\right) \ x^k = (e^\lambda x)^k\  .$$
Hence, since the differential operator is additive,
$$\exp\left(\lambda x\frac{d}{dx}\right) \ f(x) = f(e^\lambda x)\ .$$
The Proposition follows immediately.
\end{proof}

\noindent The coefficients of the logarithm may be written as 
\begin{eqnarray*}
\log(\Phi^{\mathbf{p}}) & = &
 \sum_{d=1}^\infty \sum_{r=-1}^\infty C^r_{d}(\mathbf{p}) \ t^r \frac{x^d}{d!}
\\
& = &
\sum_{d=1}^\infty \sum_{r=-1}^\infty C^r_{d}\ t^r \frac{x^d}{d!}
\exp\left( \sum_{i=1}^\infty dp_i t^i\right)\\
& = & 
\sum_{\sigma} \sum_{d=1}^\infty \sum_{r=-1}^\infty C^r_{d}\ t^r \frac{x^d}{d!}
\ \frac{d^{\ell(\sigma)} t^{|\sigma|}
{\mathbf{p}}^{\sigma}}
{|\text{Aut}(\sigma)|}
\ . 
\end{eqnarray*}

$$ $$
$$ $$
$$ $$
\pagebreak

\noindent III. {\em Full system of tautological relations.}

\vspace{10pt}

Following Proposition 5 of {\em Moduli of stable quotients} \cite{MOP}, we
can obtain a much larger set of relations in the tautological
ring of $\M_g$ by including several factors of $\pi_*(s^{a_i}\omega^{b_i})$
in the integrand
instead of just a single factor. We study the
associated relations where the $a_i$ are always $1$.
The $b_i$ then form the parts of a partition $\sigma$.

To state the relations we obtain, 
we start by enriching the function $\gamma$ from Section B.I,
\begin{eqnarray*}
\gamma^{\mathbf{p}} &=& 
 \sum_{i\geq 1} \frac{B_{2i}}{2i(2i-1)} \kappa_{2i-1} t^{2i-1}\\
& &
+ 
\sum_{\sigma}
\sum_{d=1}^\infty \sum_{r=-1}^\infty C^{r}_d 
\kappa_{r+|\sigma|}\ t^r \frac{x^d}{d!} \ \frac{d^{\ell(\sigma)} t^{|\sigma|}
{\mathbf{p}}^{\sigma}}
{|\text{Aut}(\sigma)|}
\ .
\end{eqnarray*}
Let $\widehat{\gamma}^{\, \mathbf{p}}$ be defined by a similar formula,
\begin{eqnarray*}
\widehat{\gamma}^{\, \mathbf{p}} &=& 
 \sum_{i\geq 1} \frac{B_{2i}}{2i(2i-1)} \kappa_{2i-1} (-t)^{2i-1}\\
& &
+ 
\sum_{\sigma}
\sum_{d=1}^\infty \sum_{r=-1}^\infty C^{r}_d 
\kappa_{r+|\sigma|}\ (-t)^r \frac{x^d}{d!} 
\ \frac{d^{\ell(\sigma)} t^{|\sigma|}
{\mathbf{p}}^{\sigma}}
{|\text{Aut}(\sigma)|}
\ .
\end{eqnarray*}
The sign of $t$ in $t^{|\sigma|}$ does not change in 
$\widehat{\gamma}^{\, \mathbf{p}}$.
The $\kappa_{-1}$ terms which appear will later be set to
0.

The full system of relations are obtain from the 
coefficients of the functions
\vspace{-10pt}
$$\exp(-\gamma^{\mathbf{p}}), \ \ \ \ 
\exp(-\sum_{r=0}^\infty \kappa_r t^r p_{r+1})\cdot
\exp( -\widehat{\gamma}^{\, \mathbf{p}})
$$

$$ $$

$$ $$
$$ $$
\pagebreak

\begin{Theorem} In $R^r(\M_g)$, the relation
$$\Big[ \exp(-\gamma^{\mathbf{p}}) \Big]_{t^rx^d\mathbf{p}^\sigma} =
(-1)^g\Big[ 
\exp(-\sum_{r=0}^\infty \kappa_r t^r p_{r+1})\cdot
\exp( -\widehat{\gamma}^{\, \mathbf{p}})
 \Big]_{t^rx^d\mathbf{p}^\sigma}$$
holds when $g-2d-1+|\sigma| < r$.
\end{Theorem}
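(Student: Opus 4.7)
The plan is to extend the proof of Theorem 2 by exploiting the full strength of Proposition 5 of \cite{MOP}, which permits several factors of $\pi_*(s \hpo \omega^{b_i})$ in the integrand on the moduli space of stable quotients, with the parts $b_i$ forming a partition $\sigma$. The identity we want is an equality of two expressions for the same integral, and the natural way to produce such an equality is to compute the integral in two different ways.

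The first step is geometric. On a moduli space of stable quotients $\SQ_g(\mathbb{P}^1, d)$ we integrate a product of $\ell(\sigma)$ insertions of the form $\pi_*(s \hpo \omega^{b_i})$ (weighted by appropriate powers of an equivariant parameter $t$) against the virtual fundamental class and push forward to $\M_g$. A virtual dimension count, adjusted by the codimension $|\sigma|$ contributed by the extra insertions, yields the degree condition $r > g - 2d - 1 + |\sigma|$ exactly as in Theorem 2.

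The second step is to translate this integral into the combinatorial form $[\exp(-\gamma^{\mathbf{p}})]_{t^r x^d \mathbf{p}^\sigma}$. The function $\Phi^{\mathbf{p}}$ is the right generating series: the operator $\exp(\sum_{i \geq 1} p_i t^i \, x \tfrac{d}{dx})$ records how the $\pi_*(s \hpo \omega^{b_i})$ factors are distributed, and Proposition 1 shows that this exponential operator commutes with the logarithm, converting $\log \Phi^{\mathbf{p}}$ into the partition-indexed form that appears in $\gamma^{\mathbf{p}}$. The shift $\kappa_r \mapsto \kappa_{r + |\sigma|}$ reflects that each $\omega^{b_i}$ pushed to $\M_g$ contributes an additional $\kappa$-degree of $b_i$.

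The third and most delicate step is to produce the right-hand side via a $t \leftrightarrow -t$ duality on $\SQ_g(\mathbb{P}^1, d)$, realized as an equivariant localization with respect to the $\mathbb{C}^*$-action on $\mathbb{P}^1$. The two fixed points $0$ and $\infty$ give contributions related by flipping the sign of the target equivariant parameter, while the powers of $t$ that track $\omega$-insertions are unaffected; this is precisely the asymmetry visible in the definition of $\widehat{\gamma}^{\,\mathbf{p}}$, where $t^r$ becomes $(-t)^r$ but $t^{|\sigma|}$ does not. The $(-1)^g$ factor arises from the ratio of the equivariant Euler classes of the Hodge bundle at the two fixed loci, and the prefactor $\exp(-\sum_{r \geq 0} \kappa_r t^r p_{r+1})$ accounts for insertions that localize on the opposite component from the main contribution and therefore appear as isolated $\kappa$-classes indexed by a single part; the exponential form reflects the freedom to distribute such ``split-off'' insertions independently among the $\ell(\sigma)$ factors.

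The main obstacle will be the localization bookkeeping in the third step: pinning down the exact combinatorial form of both fixed-point contributions, verifying that the split-off piece is precisely the clean exponential $\exp(-\sum_{r \geq 0} \kappa_r t^r p_{r+1})$, and tracking signs so that the Hodge comparison produces exactly $(-1)^g$. Once steps two and three agree, equating the two localization expressions for the same integral yields Theorem 4.
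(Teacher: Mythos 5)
The first thing to say is that these notes do not contain a proof of Theorem~3: the statement is asserted to follow from Proposition~5 of \cite{MOP} by allowing several factors $\pi_*(s\,\omega^{b_i})$ in the integrand, with the detailed argument deferred to \cite{PP}. Your sketch follows exactly the route the paper indicates --- $\mathbb{C}^*$-localization on the moduli of stable quotients $\overline{Q}_g(\mathbb{P}^1,d)$, the two fixed loci (subsheaf contained in one or the other coordinate factor) producing the two sides of the identity, the factor $(-1)^g$ arising from comparing the equivariant Euler classes of the Hodge bundle over the two loci, and the prefactor $\exp(-\sum_{r}\kappa_r t^r p_{r+1})$ arising from the equivariant weight acquired by $s=c_1(S^\vee)$ over the second fixed point, which shifts each insertion $\pi_*(s\,\omega^{b_i})$ by $t\,\kappa_{b_i-1}$. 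All of these qualitative claims are consistent with the intended argument, and the codimension count giving $g-2d-1+|\sigma|$ is correct.

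There remain two genuine gaps. First, the mechanism that converts the localization formula into a relation is misstated. You frame the theorem as ``an equality of two expressions for the same integral,'' obtained by computing the integral in two ways; taken literally this is a single localization computation, and equating it with itself is a tautology. The missing idea is that the equivariant pushforward $\nu_*\big(\prod_i \pi_*(s\,\omega^{b_i}) \cap [\overline{Q}_g(\mathbb{P}^1,d)]^{vir}\big)$ is an honest equivariant class of codimension $r_0=g-2d-1+|\sigma|$ on $\overline{\M}_g$, hence has no component in codimension $r>r_0$, whereas the two fixed-locus contributions are individually Laurent series in the equivariant parameter with terms in every codimension. The relation of Theorem~3 is precisely the cancellation of the codimension-$r$ parts of the two contributions for $r>r_0$; your virtual dimension count produces the correct threshold but, without this polynomiality/vanishing argument, produces no relation at all. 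Second, the entire quantitative content --- that the fixed-locus vertex integrals assemble into $\Phi$, that the insertions exponentiate into $\gamma^{\mathbf{p}}$ and $\widehat{\gamma}^{\,\mathbf{p}}$ with the stated asymmetry (only $t^r$, not $t^{|\sigma|}$, changes sign), and that the normalization is exactly $(-1)^g$ --- is acknowledged but postponed to ``localization bookkeeping,'' which is where essentially all of the work of the theorem lives. As written, the proposal is a correct plan in the spirit of \cite{MOP} and \cite{PP} rather than a proof.
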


Again, we see the genus shifting mod 2 property.
If the relation  holds in genus $g$, then the
{\em same} relation holds in genera $h=g-2\delta$ for all
natural numbers
$\delta\in \mathbb{N}$.

In case $\sigma=\emptyset$, Theorem 3 specializes to 
the relation
\begin{eqnarray*}
\Big[ \exp(-\gamma(t,x)) \Big]_{t^rx^d} & = &
(-1)^g\Big[ 
\exp( -\gamma(-t,x))
 \Big]_{t^rx^d} \\
& = & (-1)^{g+r} \Big[ 
\exp( -\gamma(t,x))
 \Big]_{t^rx^d}\ ,
\end{eqnarray*}
nontrivial only if $g\equiv r+1$ mod 2. If the mod 2 condition
holds, then we obtain the relations of
Theorem 2.

Consider the case $\sigma=(1)$. The left side of the relation is
then
$$
\Big[ \exp(-\gamma(t,x))\cdot \left(-\sum_{d=1}^\infty
\sum_{s=-1}^\infty C^s_{d}\ \kappa_{s+1} t^{s+1} \frac{dx^d}{d!}\right)
\Big]_{t^rx^d} \ .
$$
The right side is
$$(-1)^g\Big[ \exp(-\gamma(-t,x))\cdot\left(-\kappa_0t^0+
\sum_{d=1}^\infty
\sum_{s=-1}^\infty C^s_{d}\ \kappa_{s+1} (-t)^{s+1} \frac{dx^d}{d!}
\right)
\Big]_{t^rx^d} \ .
$$

If $g\equiv r+1$ mod 2, then the large terms cancel and we
obtain
$$-\kappa_0 \cdot \Big[ \exp(-\gamma(t,x)) \Big]_{t^rx^d} =0 \ . $$
Since $\kappa_0=2g-2$ and 
$$(g-2d-1+1 < r)   \ \ \implies\ \  (g-2d-1 < r),$$
we recover most (but not all)
of the $\sigma=\emptyset$ equations.

If $g\equiv r$ mod 2, then the resulting equation is
\begin{equation*}
\Big[ \exp(-\gamma(t,x))\cdot \left(\kappa_0-2
\sum_{d=1}^\infty
\sum_{s=-1}^\infty C^s_{d}\ \kappa_{s+1} t^{s+1} \frac{dx^d}{d!}\right)
\Big]_{t^rx^d}=0
\end{equation*}
when $g-2d<r$.

\pagebreak

\noindent IV. {\em Expanded form.}
\vspace{10pt}

Let $\sigma=(1^{a_1}2^{a_2}3^{a_3} \ldots)$ be a partition
of length $\ell(\sigma)$ and size $|\sigma|$.
We can directly write the corresponding
relation in $R^*(\M_g)$
obtained from Theorem 3.

A {\em subpartition} $\sigma'\subset \sigma$ is obtained
by selecting a nontrivial subset of the parts of $\sigma$.
A {\em division} of $\sigma$ is a disjoint union
\begin{equation}\label{rrgg}
\sigma = \sigma^{(1)} \cup \sigma^{(2)} \cup \sigma^{(3)}\ldots
\end{equation}
of subpartitions which exhausts $\sigma$.
The subpartitions in \eqref{rrgg} are unordered.
Let
$\mathcal{S}(\sigma)$ be the set of divisions of $\sigma$.
For example,
\begin{eqnarray*}
\mathcal{S}(1^12^1) &=& \{ \ (1^12^1),\ (1^1) \cup (2^1)\ \}\ , \\
\mathcal{S}(1^3) &=& \{\ (1^3), \ (1^2)\cup (1^1) \ \}\ .
\end{eqnarray*}

We will use the notation $\sigma^\bullet$ to
denote a division of $\sigma$ with subpartitions  $\sigma^{(i)}$.
Let
$$m(\sigma^\bullet) = \frac{1}{|\text{Aut}(\sigma^\bullet)|}
\frac{|\text{Aut}(\sigma)|}{\prod_{i=1}^{\ell(\sigma^\bullet)}
|\text{Aut}(\sigma^{(i)})|}.$$
Here, $\text{Aut}(\sigma^\bullet)$ is the group
permuting equal subpartitions.
The factor $m(\sigma^\bullet)$ may be interpreted
as counting the number of different ways the disjoint union
can be made.

To write explicitly the $\mathbf{p}^\sigma$ coefficient of
$\exp(\gamma^{\mathbf{p}})$, we introduce the functions 
$$F_{n,m}(t,x) = -\sum_{d=1}^\infty \sum_{s=-1}^\infty C^s_{d}\ \kappa_{s+m} t^{s+m} 
\frac{d^n x^d}{d!}$$
for $n,m \geq 1$.
Then,
\begin{multline*}
|\text{Aut}(\sigma)| \cdot
\Big[ \exp(-\gamma^{\mathbf{p}}) \Big]_{t^rx^d\mathbf{p}^\sigma} =\\
\Big[ \exp(-\gamma(t,x)) 
\cdot
\left(
\sum_{\sigma^\bullet\in \mathcal{S}(\sigma)}
m(\sigma^\bullet)
\prod_{i=1}^{\ell(\sigma^\bullet)}
F_{\ell(\sigma^{(i)}), |\sigma^{(i)}|} \right) 
\Big]_{t^rx^d} \ .
\end{multline*}
The length $\ell(\sigma^{*,\bullet})$ is the number
of unmarked subpartitions.

Let $\sigma^{*,\bullet}$ be a division of $\sigma$ with a 
marked subpartition,
\begin{equation}\label{rrggg}
\sigma = \sigma^* \cup \sigma^{(1)} \cup \sigma^{(2)} \cup \sigma^{(3)}\ldots,
\end{equation}
labelled by the superscript $*$. 
The marked subpartition is permitted to be empty. Let 
$\mathcal{S}^*(\sigma)$ denote the
set of marked divisions of $\sigma$.
Let
$$m(\sigma^{*,\bullet}) = \frac{1}{|\text{Aut}(\sigma^\bullet)|}
\frac{|\text{Aut}(\sigma)|}{|\text{Aut}(\sigma^*)|
\prod_{i=1}^{\ell(\sigma^{*,\bullet})}
|\text{Aut}(\sigma^{(i)})|}.$$
Then, $|\text{Aut}(\sigma)|$ 
times the right side of Theorem 3 may be written as
\begin{multline*}
(-1)^{g+|\sigma|} |\text{Aut}(\sigma)| \cdot
\Big[ \exp(-\gamma(-t,x)) 
\cdot\\
\left(
\sum_{\sigma^{*,\bullet}\in \mathcal{S}^*(\sigma)}
m(\sigma^{*,\bullet})
 \prod_{j=1}^{\ell(\sigma^*)}
\kappa_{\sigma^*_{j}-1} (-t)^{\sigma^*_j-1}
\prod_{i=1}^{\ell(\sigma^{*,\bullet})}
F_{\ell(\sigma^{(i)}), |\sigma^{(i)}|} (-t,x)\right)
\Big]_{t^rx^d}
\end{multline*}

To write Theorem 3 in the simplest form, the following
definition with the Kronecker $\delta$ is useful,
$$m^\pm(\sigma^{*,\bullet}) = 
(1\pm\delta_{0,|\sigma^*|}) \cdot m(\sigma^{*,\bullet}).$$
There are two cases.
If $g\equiv r +|\sigma|$ mod 2, then Theorem 3 is equivalent to
the vanishing of
\begin{multline*}
\Big[ \exp(-\gamma) 
\cdot
\left(
\sum_{\sigma^{*,\bullet}\in \mathcal{S}^*(\sigma)}
m^-(\sigma^{*,\bullet})
 \prod_{j=1}^{\ell(\sigma^*)}
\kappa_{\sigma^*_{j}-1} t^{\sigma^*_j-1}
\prod_{i=1}^{\ell(\sigma^{*,\bullet})}
F_{\ell(\sigma^{(i)}), |\sigma^{(i)}|} \right)
\Big]_{t^rx^d}  .
\end{multline*}
If $g\equiv r +|\sigma|+1$ mod 2, then Theorem 3 is equivalent
to the vanishing of
\begin{multline*}
\Big[ \exp(-\gamma) 
\cdot
\left(
\sum_{\sigma^{*,\bullet}\in \mathcal{S}^*(\sigma)}
m^+(\sigma^{*,\bullet})
 \prod_{j=1}^{\ell(\sigma^*)}
\kappa_{\sigma^*_{j}-1} t^{\sigma^*_j-1}
\prod_{i=1}^{\ell(\sigma^{*,\bullet})}
F_{\ell(\sigma^{(i)}), |\sigma^{(i)}|} \right)
\Big]_{t^rx^d} .
\end{multline*}
In either case, the relations are valid
in the ring $R^*(\M_g)$ only if the condition
$g-2d-1+|\sigma| < r$ holds.

\pagebreak

\noindent V. {\em Further examples.}
\vspace{10pt}

If $\sigma=(k)$ has a single part, then the two cases
of Theorem 3 are the following.
If $g\equiv r+k$ mod 2,  we have
$$\Big[ \exp(-\gamma)\cdot \kappa_{k-1}t^{k-1} \Big]_{t^r x^d}=0\ $$
which is a consequence of Theorem 2.
If $g\equiv r+k+1$ mod 2, we have 
$$\Big[ \exp(-\gamma)\cdot \left(\kappa_{k-1}t^{k-1}
+ 2 F_{1,k}\right) \Big]_{t^r x^d}=0$$

If $\sigma=(k_1k_2)$ has two distinct parts, then the two cases
of Theorem 3 are as follows.
If $g\equiv r+k_1+k_2$ mod 2,  we have
\begin{multline*}\Big[ \exp(-\gamma)\cdot \big(
\kappa_{k_1-1}\kappa_{k_2-1}t^{k_1+k_2-2}\\ +\kappa_{k_1-1}t^{k_1-1} F_{1,k_2}
+\kappa_{k_2-1} t^{k_2-1} F_{1,k_1}\big) 
\Big]_{t^r x^d}=0\ .
\end{multline*}
If $g\equiv r+k_1+k_2+1$ mod 2,  we have
\begin{multline*}\Big[ \exp(-\gamma)\cdot \big(
\kappa_{k_1-1}\kappa_{k_2-1}t^{k_1+k_2-2} +\kappa_{k_1-1} t^{k_1-1}F_{1,k_2}
\\+ \kappa_{k_2-1} t^{k_2-1}F_{1,k_1} 
+ 2 F_{2,k_1+k_2} + 2 F_{1,k_1} F_{1,k_2}
\big) 
\Big]_{t^r x^d}=0\ .
\end{multline*}

In fact, the $g\equiv r+k_1+k_2$ mod 2 equation above is not new.
The genus $g$ and codimension $r_1=r-k_2+1$ case of partition $(k_1)$
yields
$$\Big[ \exp(-\gamma)\cdot \left(\kappa_{k_1-1}t^{k_1-1}
+ 2 F_{1,k_1}\right) \Big]_{t^{r_1} x^d}=0\ .$$
After multiplication with $\kappa_{k_2-1}t^{k_2-1}$, we obtain
$$\Big[ \exp(-\gamma)\cdot \left(\kappa_{k_1-1}\kappa_{k_2-1}t^{k_1+k_2-2}
+ 2 \kappa_{k_2-1}t^{k_2-1} F_{1,k_1}\right) \Big]_{t^{r} x^d}=0\ .$$
Summed with the corresponding equation with $k_1$ and $k_2$
interchanged yields the above 
$g\equiv r+k_1+k_2$ mod 2 case.

\pagebreak

\noindent VI. {\em Expanded form revisited.}
\vspace{10pt}

Consider the partition $\sigma=(k_1k_2\cdots k_\ell)$
with distinct parts. We obtain from Theorem 3, in
the $g\equiv r+|\sigma|$ mod 2 case, the
vanishing of 
\begin{multline*}
\Big[ \exp(-\gamma) 
\cdot
\left(
\sum_{\sigma^{*,\bullet}\in \mathcal{S}^*(\sigma)}
(1-\delta_{0,|\sigma^*|})
 \prod_{j=1}^{\ell(\sigma^*)}
\kappa_{\sigma^*_{j}-1} t^{\sigma^*_j-1}
\prod_{i=1}^{\ell(\sigma^{*,\bullet})}
F_{\ell(\sigma^{(i)}), |\sigma^{(i)}|} \right)
\Big]_{t^rx^d}  
\end{multline*}
since all the factors $m(\sigma^{*,\bullet})$ are
1. In the 
$g\equiv r+|\sigma|+1$ mod 2 case, we obtain the vanishing
of
\begin{multline*}
\Big[ \exp(-\gamma) 
\cdot
\left(
\sum_{\sigma^{*,\bullet}\in \mathcal{S}^*(\sigma)}
(1+\delta_{0,|\sigma^*|})
 \prod_{j=1}^{\ell(\sigma^*)}
\kappa_{\sigma^*_{j}-1} t^{\sigma^*_j-1}
\prod_{i=1}^{\ell(\sigma^{*,\bullet})}
F_{\ell(\sigma^{(i)}), |\sigma^{(i)}|} \right)
\Big]_{t^rx^d}  
\end{multline*}
for the same reason.

\begin{Proposition}
The $g\equiv r+|\sigma|$ mod 2 case is a consequence of
the $g\equiv r'+|\sigma'|+1$ mod 2 cases of smaller
partitions $\sigma'$.
\end{Proposition}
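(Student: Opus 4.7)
My plan is to realize the target $g \equiv r + |\sigma|$ mod $2$ case of Theorem 3 explicitly as a $\Q$-linear combination of the $g \equiv r' + |\sigma'| + 1$ mod $2$ cases for strict subpartitions $\sigma'=\sigma\setminus T$, where $T$ ranges over odd-cardinality subsets of the parts $\{k_1,\ldots,k_\ell\}$ of $\sigma$.

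First I would fix such a $T$, take the $B$-relation for $\sigma'=\sigma\setminus T$ at $r' = r - \sum_{k\in T}(k-1)$, and multiply it by $\prod_{k\in T}\kappa_{k-1}t^{k-1}$. The total $t$-degree is then $r$, and the parity count
$$r'+|\sigma'|+1 \;\equiv\; r+|\sigma|+\ell(T)+1 \pmod{2}$$
shows that the shifted relation belongs to the target parity class precisely when $\ell(T)$ is odd---this is exactly why only odd-cardinality $T$ enter. Using the expanded form of Section VI, the contribution of this shifted $B$-relation supports precisely those marked divisions $\sigma^{*,\bullet}$ of $\sigma$ with $T \subseteq \sigma^*$, with weight inherited from the factor $1+\delta_{0,|\sigma'^*|}$: weight $2$ when $T=\sigma^*$ and weight $1$ otherwise.

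Next I assign to each odd-cardinality $T$ a coefficient $c_{|T|}$, symmetric in the parts of $\sigma$. The coefficient of a marked division $\sigma^{*,\bullet}$ of $\sigma$ with $s = \ell(\sigma^*)$ in the resulting combined sum then equals
$$W(s)\;=\;\sum_{\substack{k\ \text{odd}\\ 1\le k<s}}c_k\binom{s}{k}\;+\;2c_s\,[s\text{ odd}],$$
and matching the target $A$-relation forces $W(s)=1$ for every $s\ge 1$. The odd-$s$ conditions determine the $c_k$'s by recursion, yielding $c_1=\tfrac12,\ c_3=-\tfrac14,\ c_5=\tfrac12,\ldots$; at $\ell=2$ this already reproduces the $\sigma=(k_1k_2)$ derivation of Section V.

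The main obstacle I expect is verifying that the even-$s$ conditions are then automatically satisfied. I would handle this via the exponential generating function $\tilde C(x)=\sum_{k\ \text{odd}}c_kx^k/k!$: packaging both families of conditions yields the single functional identity $e^x\tilde C(x)=(e^x-1)-\tilde C(x)$, whose unique solution is
$$\tilde C(x)\;=\;\frac{e^x-1}{e^x+1}\;=\;\tanh(x/2).$$
Since $\tanh(x/2)$ is an odd function, the ansatz $c_k=0$ for $k$ even is consistent, and its Taylor coefficients supply the required $c_k$, producing the sought-after presentation of the $A$-relation as an explicit combination of $B$-relations for strictly smaller partitions.
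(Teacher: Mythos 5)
Your proof is correct and takes essentially the same approach the paper invokes: its proof of this Proposition is only the one-line remark that the strategy is identical to the Section V examples, i.e.\ multiply the $g\equiv r'+|\sigma'|+1$ relations of strictly smaller partitions by the factors $\kappa_{k-1}t^{k-1}$ of the removed parts and sum. You go further by actually producing the general coefficients, and your bookkeeping checks out: only odd-cardinality $T$ land in the right parity class, the weight $W(s)$ is computed correctly from the $1+\delta_{0,|\sigma^*|}$ factors, the validity inequality for the ingredient relations is implied by that of the target, and the generating-function solution $\widetilde{C}(x)=\tanh(x/2)$ (odd, with $c_1=\tfrac12$, $c_3=-\tfrac14$, $c_5=\tfrac12$, matching the factor $\tfrac12$ implicit in the two-part example) settles all the conditions $W(s)=1$ at once.
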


\begin{proof}
The strategy is identical to that employed in the
special cases of the result proven in Section V.
\end{proof}

If $\sigma$ has repeated parts, the relations of
Theorem 3 are obtained by viewing the parts are
distinct and using the above formulas.
For example,
 the two cases
of Theorem 3 
for $\sigma=(k^2)$
are as follows.
If $g\equiv r+2k$ mod 2,  we have
\begin{equation*}\Big[ \exp(-\gamma)\cdot \big(
\kappa_{k-1}\kappa_{k-1}t^{2k-2} +2\kappa_{k-1}t^{k-1} F_{1,k}\big) 
\Big]_{t^r x^d}=0\ .
\end{equation*}
If $g\equiv r+2k+1$ mod 2,  we have
\begin{multline*}\Big[ \exp(-\gamma)\cdot \big(
\kappa_{k-1}\kappa_{k-1}t^{2k-2} +2\kappa_{k-1} t^{k-1}F_{1,k}
\\ 
+ 2 F_{2,2k} + 2 F_{1,k} F_{1,k}
\big) 
\Big]_{t^r x^d}=0\ .
\end{multline*}
The factors occur via repetition of terms in
the formulas for distinct parts.

\pagebreak

\noindent VII. {\em Differential equations.}
\vspace{10pt}

The function $\Phi$ satisfies a basic differential equation obtained
from the series definition,
$$\frac{d}{dx} (\Phi- tx \frac{d}{dx} \Phi) = -\frac{1}{t} \Phi \ .$$
After expanding and dividing by $\Phi$, we find
$$- tx \frac{\Phi_{xx}}{\Phi} - t \frac{\Phi_x}{\Phi}
+ \frac{\Phi_x}{\Phi} = -\frac{1}{t} \ $$
which can be written as
\begin{equation} \label{diffe}
-t^2x \gamma^*_{xx} = t^2 x (\gamma^*_x)^2 + t^2 \gamma^*_x 
-t \gamma^*_x -1 \ 
\end{equation}
where, as before, $\gamma^*=\log(\Phi)$.
Equation  \eqref{diffe} has been studied 
by Ionel in {\em Relations in the tautological ring} \cite{Ion}. We present
here  results of hers which will be useful for us.

To kill the pole and  match the required constant term,
 we will consider
the function
$$\Gamma=-t\left(\sum_{i\geq 1} \frac{B_{2i}}{2i(2i-1)}t^{2i-1} + \gamma^* 
\right)\ .$$ 
The differential equation \eqref{diffe} becomes
$$tx \Gamma_{xx} = x (\Gamma_x)^2 +(1-t) \Gamma_x -1 \ .$$
The differential equation is
easily seen to  uniquely determine $\Gamma$ once
  the initial conditions
$$\Gamma(t,0) = - 
\sum_{i\geq 1} \frac{B_{2i}}{2i(2i-1)} t^{2i}$$
are specified. 
By Ionel's first result,
$$\Gamma_x = \frac{-1+\sqrt{1+4x}}{2x} + \frac{t}{1+4x}
+ \sum_{k=1}^\infty \sum_{j=0}^k t^{k+1} q_{k,j}(-x)^j(1+4x)^{-j-\frac{k}{2}-1}\  $$
where the postive integers $q_{k,j}$ (defined to vanish unless
$k\geq j \geq 0$) are defined via the recursion
$$q_{k,j} = (2k+4j-2)q_{k-1,j-1} + (j+1)q_{k-1,j}
+ \sum_{m=0}^{k-1} \sum_{l=0}^{j-1} q_{m,l} q_{k-1-m,j-1-l}\ $$
from the initial value $q_{0,0}=1$.

\pagebreak

Ionel's second result is obtained by integrating $\Gamma_x$
with respect to $x$. She finds
$$\Gamma = \Gamma(0,x) + \frac{t}{4} \log(1+4x)
-\sum_{k=1}^\infty \sum_{j=0}^k t^{k+1} c_{k,j} (-x)^j (1+4x)^{-j-\frac{k}{2}}\ $$
where the coefficients $c_{k,j}$ are determined by 
$$q_{k,j}=(2k+4j)c_{k,j} +(j+1) c_{k,j+1}$$
for $k\geq 1$ and $k\geq j\geq 0$.

While the derivation of the formula for $\Gamma_x$ is straightforward,
the formula for $\Gamma$ is quite subtle as the intial conditions
(given by the Bernoulli numbers) are used to show the
vanishing of constants of integration. Said differently, the
recusions for $q_{k,j}$ and $c_{k,j}$ must be shown to imply the formula
$$c_{k,0} = \frac{B_k}{k(k-1)}\ .$$
A third result of Ionel's is the determination of the
extremal $c_{k,k}$,
$$\sum_{k=1}^\infty c_{k,k} z^k = \log\left( \sum_{k=1}^\infty
\frac{(6k)!}{(2k)!(3k)!} \left(\frac{z}{72}\right)^k \right)\ .$$

The formula for $\Gamma$ becomes simpler after the following
very natural change of variables,
\begin{equation}\label{varch}
u= \frac{t}{\sqrt{1+4x}} \ \ \ \text{and} \ \ \
y= \frac{-x}{1+4x} \ .
\end{equation}
The change of variables defines a new function
 $$\widehat{\Gamma}(u,y) = \Gamma(t,x) \ .$$
The formula for $\Gamma$ implies
$$\frac{1}{t} \Gamma(u,y) = \frac{1}{t} \Gamma(0,y)
-\frac{1}{4}\log(1+4y)
-\sum_{k=1}^\infty \sum_{j=0}^k  c_{k,j}u^k y^j  \ .
$$

Ionel's fourth result relates coefficients of series
after the change of variables \eqref{varch}.
Given any series
$$P(t,x) \in \mathbb{Q}[[t,x]],$$ let
$\widehat{P}(u,y)$ be the series obtained from
the change of variables \eqref{varch}. Ionel proves
coefficient relation
$$\big[ P(t,x) \big]_{t^r x^d} = (-1)^d \big[
(1+4y)^{\frac{r+2d-2}{2}} \cdot \widehat{P}(u,y) \big]_{u^r y^d}\ .$$

\pagebreak

\noindent VII. {\em Analysis of the relations of Theorem 2}
\vspace{10pt}

We now study in detail the simple relations of Theorem 2,
\begin{equation*}
\big[ \exp(-\gamma) \big]_{t^rx^d} =0 \    \in R^r(\M_g)
\end{equation*}
when $g-2d-1< r$ and 
$g\equiv r+1 \hspace{-5pt} \mod 2$.
Let 
$$\widehat{\gamma}(u,y) = \gamma(t,x)$$
be obtained from the variable change \eqref{varch},
$$\widehat{\gamma}(u,y) = 
\frac{\kappa_0}{4}\log(1+4y)+
\sum_{k=1}^\infty \sum_{j=0}^k \kappa_k c_{k,j}u^k  y^j  \ 
$$
modulo $\kappa_{-1}$ terms which we set to $0$.
Applying Ionel's coefficient result,
\begin{eqnarray*}
\big[ \exp(-\gamma) \big]_{t^rx^d}& = & \big[ 
(1+4y)^{\frac{r+2d-2}{2}} \cdot
\exp(-\widehat{\gamma}) 
\big]_{u^r y^d} \\ & = &
\left[ 
(1+4y)^{\frac{r+2d-2}{2}-\frac{\kappa_0}{4}} \cdot
\exp(-     \sum_{k=1}^\infty \sum_{j=0}^k \kappa_k c_{k,j}u^k  y^j    ) 
\right]_{u^r y^d} \\
& = & 
\left[ 
(1+4y)^{\frac{r-g+2d-1}{2}} \cdot
\exp(-     \sum_{k=1}^\infty \sum_{j=0}^k \kappa_k c_{k,j}u^k y^j    ) 
\right]_{u^r y^d} \ .
\end{eqnarray*}
In the last line, the substitution $\kappa_0=2g-2$
has been made.

Consider first the exponent of $1+4y$.
By the assumptions on $g$ and $r$ in Theorem 2,
$$\frac{r-g+2d-1}{2}\geq 0$$ 
and the fraction is integral. Hence, the $y$ degree of
the prefactor
$$(1+4y)^{\frac{r-g+2d-1}{2}}$$
is exactly $\frac{r-g+2d-1}{2}$.
The $y$ degree of the exponential factor is bounded
from above by the $u$ degree. We conclude
$$\left[ 
(1+4y)^{\frac{r-g+2d-1}{2}} \cdot
\exp(-     \sum_{k=1}^\infty \sum_{j=0}^k \kappa_k c_{k,j}u^k  y^j    ) 
\right]_{u^r y^d} =0$$
is the  {\em trivial} relation unless
$$r \geq d - {\frac{r-g+2d-1}{2}} = -\frac{r}{2} +\frac{g+1}{2} \ .$$
Rewriting the inequality, we obtain
$3r \geq g+1$
which is equivalent to $r > \lfloor \frac{g}{3} \rfloor$.
The conclusion is in agreement with the proven freeness of $R^*(\M_g)$
up to (and including) degree $\lfloor \frac{g}{3} \rfloor$.

A similar connection between Theorem 2 and Ionel's relations in
\cite{Ion} has also been found by Shengmao Zhu \cite{zhu}. 

\vspace{+10pt}
\noindent VIII. {\em Analysis of the relations of Theorem 3}
\vspace{10pt}

For the relations of Theorem 3, we will require additional
notation.  To start, 
let
$$\gamma^c(u,y) =      \sum_{k=1}^\infty \sum_{j=0}^k \kappa_k c_{k,j}u^k  y^j  \ .$$By Ionel's second result,
$$\frac{1}{t}\Gamma = \frac{1}{t} \Gamma(0,x) + \frac{1}{4} \log(1+4x)
-\sum_{k=1}^\infty \sum_{j=0}^k t^{k} c_{k,j} 
(-x)^j (1+4x)^{-j-\frac{k}{2}}\ .$$
Let $c_{k,j}^0 = c_{k,j}$.
We  define the constants $c_{k,j}^n$ for $n\geq 1$ by
\begin{multline*}
\left( x \frac{d}{dx}\right)^n \frac{1}{t}\Gamma 
=\left( x \frac{d}{dx}\right)^{n-1} \left( \frac{-1}{2t} + \frac{1}{2t}\sqrt{1+4x}\right)\\ -
\sum_{k=0}^\infty \sum_{j=0}^{k+n} t^{k} 
c^n_{k,j} (-x)^j (1+4x)^{-j-\frac{k}{2}} 
\ . 
\end{multline*}

\begin{Lemma} For $n>0$, there are constants $b^n_j$ satisfying 
$$
\left( x \frac{d}{dx}\right)^{n-1} \left(\frac{1}{2t}\sqrt{1+4x}\right)
= \sum_{j=0}^{n-1} b^n_j u^{-1} y^j \ .
$$
Moreover, $b^n_{n-1} = -2^{n-2}\cdot(2n-5)!!$ where
$(-1)!!=1$ and $(-3)!!=-1$.
\end{Lemma}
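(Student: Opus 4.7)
The plan is to carry out the change of variables \eqref{varch} explicitly and reduce the statement to a one-step recursion on polynomials in $y$. First I would compute how the operator $x\,d/dx$ (taken at fixed $t$) translates into $(u,y)$ coordinates. Using $\partial u/\partial x|_t = -2t(1+4x)^{-3/2}$ and $\partial y/\partial x|_t = -(1+4x)^{-2}$, together with the identities $1+4x = (1+4y)^{-1}$ and $x = -y/(1+4y)$ that follow from \eqref{varch}, the chain rule collapses to
$$x\frac{d}{dx} = 2yu\,\partial_u + (y+4y^2)\,\partial_y.$$
Moreover, the base function is $\frac{1}{2t}\sqrt{1+4x} = \frac{1}{2}u^{-1}$, since $\sqrt{1+4x} = t/u$.

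Next I would prove by induction on $n$ that $\left(x\frac{d}{dx}\right)^{n-1}\!\bigl(\frac{1}{2}u^{-1}\bigr) = u^{-1} P_n(y)$ for a polynomial $P_n$ of degree exactly $n-1$, which supplies the constants $b^n_j$ as the coefficients of $P_n$. The inductive step is a direct computation: on $u^{-1} P(y)$ the operator acts as
$$u^{-1}\bigl(-2y\,P(y) + (y+4y^2)\,P'(y)\bigr),$$
so the form $u^{-1}\cdot(\text{polynomial in }y)$ is preserved. If $P$ has leading term $c\,y^{n-1}$, then $-2yP$ contributes $-2c\,y^n$ while $(y+4y^2)P'$ contributes $4(n-1)c\,y^n$, giving the leading-coefficient recursion $b^{n+1}_n = 2(2n-3)\,b^n_{n-1}$.

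Finally, I would unfold this recursion from the base case $b^1_0 = 1/2$ to obtain
$$b^n_{n-1} = \tfrac{1}{2}\prod_{k=1}^{n-1}2(2k-3) = 2^{n-2}\cdot(-1)\cdot 1\cdot 3\cdots(2n-5) = -2^{n-2}(2n-5)!!$$
for $n \geq 2$, and check the boundary case $b^1_0 = -2^{-1}(-3)!! = 1/2$ under the stated convention $(-3)!! = -1$. The only step with any real subtlety is the chain-rule translation of $x\,d/dx$ into $(u,y)$ in the first paragraph; the rest is a clean induction followed by a product telescoping.
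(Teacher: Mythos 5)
Your proposal is correct and matches the paper's approach: the paper's proof is simply the statement that the result follows by induction, and you have carried out exactly that induction, after first translating $x\,d/dx$ into the $(u,y)$ coordinates (which is the natural and evidently intended setup, since the claim is phrased in those variables). Your verification that the base case $b^1_0=\tfrac12$ is consistent with the conventions $(-1)!!=1$ and $(-3)!!=-1$ plays the role of the paper's brief remark that these evaluations come from regularizing the double factorial.
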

\begin{proof}
The result is obtained by simple induction.
The negative evaluations $(-1)!!=1$ and $(-3)!!=-1$ arise from the
$\Gamma$-regularization. 
\end{proof}

\begin{Lemma} For $n>0$, we have $c_{0,n}^n = 4^{n-1}(n-1)!$.
\end{Lemma}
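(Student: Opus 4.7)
The plan is to isolate the $t^0$ contribution on both sides of the defining equation for $c^n_{k,j}$, which reduces the problem to a clean one-variable computation. The correction term $(xd/dx)^{n-1}\!\left(\tfrac{-1+\sqrt{1+4x}}{2t}\right)$ is purely $t^{-1}$, so it makes no contribution in degree $t^0$. From Ionel's second result, the $t^0$ part of $\tfrac{1}{t}\Gamma$ is exactly $\tfrac{1}{4}\log(1+4x)$. Setting
$$f_n = \left(x\tfrac{d}{dx}\right)^n\!\left[\tfrac{1}{4}\log(1+4x)\right] \qquad\text{and}\qquad w = \frac{-x}{1+4x},$$
matching $t^0$ coefficients in the definition of $c^n_{k,j}$ gives the identity $f_n = -\sum_{j=0}^n c^n_{0,j}\, w^j$.

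Next I would establish the one-step identity $\left(x\tfrac{d}{dx}\right) w = w + 4w^2$. A direct calculation yields $\tfrac{dw}{dx} = -1/(1+4x)^2$, so $(xd/dx)\,w = -x/(1+4x)^2 = w/(1+4x)$; using the elementary identity $1/(1+4x) = 1+4w$ gives $(xd/dx)\,w = w(1+4w) = w + 4w^2$. Applying $xd/dx$ to the expansion of $f_n$ then produces
$$f_{n+1} = -\sum_j c^n_{0,j}\bigl(j\, w^j + 4j\, w^{j+1}\bigr),$$
so matching coefficients of $w^j$ yields the recursion $c^{n+1}_{0,j} = j\, c^n_{0,j} + 4(j-1)\, c^n_{0,j-1}$. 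Specialising to $j = n+1$ and using $c^n_{0,n+1} = 0$ gives the crucial one-term recursion $c^{n+1}_{0,n+1} = 4n\cdot c^n_{0,n}$.

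To finish, the base case is a direct computation: $f_1 = x/(1+4x) = -w$, so $c^1_{0,1} = 1$, and iterating the recursion gives $c^n_{0,n} = 4^{n-1}(n-1)!$. The only real subtlety is verifying that the square-root correction contributes nothing to $t^0$ after applying $(xd/dx)^{n-1}$; this is clear since $xd/dx$ commutes with the prefactor $1/(2t)$ and so leaves the overall $t^{-1}$ homogeneity intact. No serious obstacle is expected; the whole argument is a short bookkeeping exercise once the extraction of the $t^0$ layer has been set up.
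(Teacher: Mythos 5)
Your argument is correct: extracting the $t^0$ layer of the defining identity reduces the claim to computing $\left(x\frac{d}{dx}\right)^n\frac{1}{4}\log(1+4x)$ as a polynomial in $w=-x/(1+4x)$, and the recursion $c^{n+1}_{0,n+1}=4n\,c^n_{0,n}$ with base case $c^1_{0,1}=1$ follows cleanly from $\left(x\frac{d}{dx}\right)w=w+4w^2$. The paper states this Lemma without proof, and your computation is exactly the kind of ``simple induction'' the authors indicate for the neighboring Lemma 2, so it fills the omission in the intended way.
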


\begin{Lemma} For $n>0$ and $k>0$, we have 
$$c_{k,k+n}^n =  (6k)(6k+4)\cdots (6k+4(n-1))\ 
c_{k,k}.$$
\end{Lemma}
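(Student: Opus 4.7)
The plan is to derive a simple recursion for $c^n_{k,j}$ from the action of the operator $x\frac{d}{dx}$ on a generic summand, and then read off the extremal coefficient.

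First I compute $x\frac{d}{dx}$ applied to a typical term $(-x)^j (1+4x)^{-j-k/2}$. Using $x\frac{d}{dx}(-x)^j = j(-x)^j$ and $(-x)^j \cdot (-4x) = 4(-x)^{j+1}$, a direct calculation gives
$$x\frac{d}{dx}\bigl[(-x)^j(1+4x)^{-j-\frac{k}{2}}\bigr]
= j\,(-x)^j(1+4x)^{-j-\frac{k}{2}} + (4j+2k)\,(-x)^{j+1}(1+4x)^{-(j+1)-\frac{k}{2}}.$$
Applying this to the defining expansion of $\left( x \frac{d}{dx}\right)^{n-1}\frac{1}{t}\Gamma$ and comparing with the expansion at level $n$ (noting that the special piece $\left(x\frac{d}{dx}\right)^{n-1}\bigl(\frac{-1}{2t}+\frac{1}{2t}\sqrt{1+4x}\bigr)$ carries only the power $t^{-1}$ and so contributes nothing to the $t^k$ coefficient for $k\geq 0$), I obtain the recursion
$$c^n_{k,j} \;=\; j\,c^{n-1}_{k,j} \;+\; \bigl(4(j-1)+2k\bigr)\,c^{n-1}_{k,j-1}$$
valid for $k\geq 0$, with the convention $c^{n-1}_{k,j}=0$ for $j>k+(n-1)$.

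Now specialize to the extremal index $j=k+n$. Since $c^{n-1}_{k,k+n}=0$ by the index bound, only the second term on the right survives, and it contributes
$$c^n_{k,k+n} \;=\; \bigl(4(k+n-1)+2k\bigr)\,c^{n-1}_{k,k+n-1} \;=\; \bigl(6k+4(n-1)\bigr)\,c^{n-1}_{k,k+n-1}.$$
Iterating this relation from $n$ down to $1$ telescopes to
$$c^n_{k,k+n} \;=\; (6k)(6k+4)(6k+8)\cdots\bigl(6k+4(n-1)\bigr)\,c^0_{k,k},$$
and since $c^0_{k,k}=c_{k,k}$, this is exactly the asserted formula.

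The only delicate point is making sure the $k\geq 0$ portion of the expansion behaves cleanly under $x\frac{d}{dx}$; this is where the clean separation of the $t^{-1}$ piece from Lemma 2 is used, and it is purely bookkeeping. The computation of the differentiation rule on the factor $(-x)^j(1+4x)^{-j-k/2}$ is the sole substantive step, and it immediately forces the product structure.
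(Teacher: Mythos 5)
Your proof is correct. The paper states this lemma (and the neighboring Lemmas 3) without proof, deferring details to \cite{PP}, but your argument is exactly the natural one implicit in the setup: the one-step rule for $x\frac{d}{dx}$ acting on $(-x)^j(1+4x)^{-j-k/2}$ gives the recursion $c^n_{k,j}=j\,c^{n-1}_{k,j}+(4(j-1)+2k)\,c^{n-1}_{k,j-1}$, the extremal index $j=k+n$ kills the first term because of the range bound $j\le k+(n-1)$ at the previous level, and the product telescopes; your observation that the $t^{-1}$ piece (Lemma 2) cannot interfere with the $t^k$, $k>0$, coefficients is the only bookkeeping needed.
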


Consider next the full set of equations given by Theorem 3
in the expanded form of Section VI.
The function $F_{n,m}$ may be rewritten as
\begin{eqnarray*}
F_{n,m}(t,x) & =& 
- \sum_{d=1}^\infty \sum_{s=-1}^\infty C^s_{d}\ \kappa_{s+m} t^{s+m} 
\frac{d^n x^d}{d!} \\
& = & -t^m \left(x\frac{d}{dx}\right)^n 
\sum_{d=1}^\infty \sum_{s=-1}^\infty C^s_{d}\ \kappa_{s+m} t^{s} 
\frac{x^d}{d!}.
\end{eqnarray*}
We may write the result in terms of the constants $b^n_j$ and $c^n_{k,j}$,
\begin{multline*}
t^{-(m-n)}F_{n,m} = -\delta_{n,1}\frac{\kappa_{m-1}}{2} \\ 
+
(1+4y)^{-\frac{n}{2}} \Big( \sum_{j=0}^{n-1}\kappa_{m-1}b_j^n u^{n-1} y^j
 -
\sum_{k=0}^\infty \sum_{j=0}^{k+n}  \kappa_{k+m}  c^n_{k,j} u^{k+n} y^j \Big)
\end{multline*}
Define the functions
$G_{n,m}(u,y)$ by
$$G_{n,m}(u,y) = \sum_{j=0}^{n-1}\kappa_{m-1}b_j^n u^{n-1} y^j
 -
\sum_{k=0}^\infty \sum_{j=0}^{k+n}  \kappa_{k+m}  c^n_{k,j} u^{k+n} y^j \ .$$

Let $\sigma=(1^{a_1}2^{a_2}3^{a_3} \ldots)$ be a partition
of length $\ell(\sigma)$ and size $|\sigma|$.
We assume the parity condition 
\begin{equation}\label{par2}
g\equiv  r + |\sigma|+1\ .
\end{equation}
Let $G_\sigma^\pm(u,y)$ be the following function associated to $\sigma$,
$$G_\sigma^\pm(u,y) =
\sum_{\sigma^{\bullet}\in \mathcal{S}(\sigma)}
\prod_{i=1}^{\ell(\sigma^{\bullet})}
\left(G_{\ell(\sigma^{(i)}), |\sigma^{(i)}|} \pm
\frac{\delta_{\ell(\sigma^{(i)}),1}}{2} 
\sqrt{1+4y}\ 
\kappa_{|\sigma^{(i)}|-1}\right)\ .$$
The relations of Theorem 3 written in the variables $u$ and $y$ is
\begin{equation*}
\Big[ (1+4y)^{\frac{r-|\sigma|-g+2d-1}{2}}
\exp(-\gamma^c) 
\left( G_\sigma^+ + G_\sigma^-\right)
\Big]_{u^{r-|\sigma|+\ell(\sigma)}y^d}  = 0 
\end{equation*}

In fact, the relations of Theorem 3 can be written in a much more
efficient form when the strategy of Proposition 2 is used
to take out lower equations.

\begin{Theorem} In $R^r(\M_g)$, the relation 
\begin{equation*}
\Big[ (1+4y)^{\frac{r-|\sigma|-g+2d-1}{2}}
\exp\left(-\gamma^c +\sum_{\sigma \neq \emptyset} G_{\ell(\sigma),|\sigma|} 
\frac{\mathbf{p}^\sigma}{|\text{\em Aut}(\sigma)|} \right) 
\Big]_{u^{r-|\sigma|+\ell(\sigma)}y^d\mathbf{p}^\sigma}  = 0 
\end{equation*}
holds when 
$g-2d-1 +|\sigma| < r$ and $g\equiv  r + |\sigma|+1$ mod 2.
\end{Theorem}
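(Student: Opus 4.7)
\vspace{10pt}
\noindent\textbf{Proof plan.} The strategy is to deduce Theorem 5 from the $(u,y)$-form of Theorem 3 displayed directly above its statement, by an induction on $|\sigma|$ in the spirit of the reductions carried out in Section V and Proposition 3. I would begin by expanding the $\mathbf{p}^\sigma$-coefficient of the exponential in Theorem 5. Since $\gamma^c$ is independent of $\mathbf{p}$ the factor $\exp(-\gamma^c)$ splits off, and the standard partitioned-exponential identity yields
$$[\mathbf{p}^\sigma]\exp\!\Bigl(\sum_{\tau\neq\emptyset}\tfrac{G_{\ell(\tau),|\tau|}}{|\mathrm{Aut}(\tau)|}\mathbf{p}^\tau\Bigr)=\frac{1}{|\mathrm{Aut}(\sigma)|}\sum_{\sigma^\bullet\in\mathcal{S}(\sigma)}m(\sigma^\bullet)\prod_iG_{\ell(\sigma^{(i)}),|\sigma^{(i)}|}.$$
After multiplying by $|\mathrm{Aut}(\sigma)|$, Theorem 5 is therefore equivalent to the vanishing of
$$X_\sigma:=\Bigl[(1+4y)^E\exp(-\gamma^c)\sum_{\sigma^\bullet\in\mathcal{S}(\sigma)}m(\sigma^\bullet)\prod_iG_{\ell(\sigma^{(i)}),|\sigma^{(i)}|}\Bigr]_{u^{r-|\sigma|+\ell(\sigma)}y^d},$$
with $E=(r-|\sigma|-g+2d-1)/2$.

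Next I would compare $X_\sigma$ with the $(u,y)$-form of Theorem 3, which asserts the same vanishing but with the summand $\sum m(\sigma^\bullet)\prod G$ replaced by $\tfrac12(G_\sigma^++G_\sigma^-)$. Expanding each factor of $G_\sigma^\pm$ and adding, every term with an odd number of $\sqrt{1+4y}\,\kappa$-pieces cancels, leaving
$$\tfrac12(G_\sigma^++G_\sigma^-)=\sum_{\sigma^\bullet}\sum_{\substack{T\subseteq\{i\,:\,\ell(\sigma^{(i)})=1\}\\|T|\text{ even}}}\prod_{i\in T}\tfrac{\sqrt{1+4y}\,\kappa_{|\sigma^{(i)}|-1}}{2}\prod_{i\notin T}G_{\ell(\sigma^{(i)}),|\sigma^{(i)}|}.$$
The $T=\emptyset$ piece (equipped with the correct multiplicities $m(\sigma^\bullet)$ when $\sigma$ has repeated parts) reproduces the summand of $X_\sigma$; what remains is to show that the sum of the $T\neq\emptyset$ contributions vanishes in $R^r(\M_g)$ modulo instances of Theorem 5 for smaller partitions.

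To handle the $T\neq\emptyset$ piece, re-index each term by the even-length sub-multiset $\tau^*\subseteq\sigma$ collected from the length-$1$ subpartitions in $T$, together with a division of the complement $\sigma\setminus\tau^*$. Using $\sqrt{1+4y}^{\ell(\tau^*)}=(1+4y)^{\ell(\tau^*)/2}$, the $T\neq\emptyset$ contribution regroups as
$$\sum_{\emptyset\neq\tau^*\subseteq\sigma}2^{-\ell(\tau^*)}\Bigl(\prod_{j=1}^{\ell(\tau^*)}\kappa_{\tau^*_j-1}\Bigr)\Bigl[(1+4y)^{E+\ell(\tau^*)/2}\exp(-\gamma^c)\!\!\sum_{(\sigma\setminus\tau^*)^\bullet}\!\!m\prod_iG_{\ell(\sigma^{(i)}),|\sigma^{(i)}|}\Bigr]_{u^{r-|\sigma|+\ell(\sigma)}y^d}.$$
Writing $r':=r-|\tau^*|+\ell(\tau^*)$, direct checks give: $r'-|\sigma\setminus\tau^*|+\ell(\sigma\setminus\tau^*)=r-|\sigma|+\ell(\sigma)$, so the extracted $u$-degree is precisely that required for $X_{\sigma\setminus\tau^*}$; $E+\ell(\tau^*)/2=(r'-|\sigma\setminus\tau^*|-g+2d-1)/2$; the inequality $g-2d-1+|\sigma\setminus\tau^*|<r'$ follows from the hypothesis for $\sigma$; and the parity $g\equiv r'+|\sigma\setminus\tau^*|+1\pmod 2$ is preserved because $\ell(\tau^*)$ is even. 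Each bracketed factor is therefore $X_{\sigma\setminus\tau^*}$ in codimension $r'$, which vanishes by the inductive hypothesis, and the whole $\tau^*$-summand vanishes in $R^r(\M_g)$ after multiplication by the tautological class $2^{-\ell(\tau^*)}\prod\kappa_{\tau^*_j-1}$.

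The main obstacle is the combinatorial bookkeeping of the multiplicity weights $m(\sigma^\bullet)$ when $\sigma$ has repeated parts: the convention implicit in $X_\sigma$ (unordered divisions with $m$-weights) and in the Section VIII formula for $G_\sigma^\pm$ must be reconciled, and the re-indexing by $\tau^*$ produces multiplicities that have to be tracked carefully. Once this is in place, the base case $\sigma=\emptyset$ is precisely the $(u,y)$-form of Theorem 2 established in Section VII, and the induction on $|\sigma|$ closes the argument.
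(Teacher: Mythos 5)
Your proposal is correct and follows essentially the same route as the paper, which states the $(u,y)$-form of Theorem 3 with $G_\sigma^+ + G_\sigma^-$ and then says only that "the strategy of Proposition 2 is used to take out lower equations"; your induction on $|\sigma|$, peeling off the even-size subsets $\tau^*$ of singleton subpartitions as $\prod_j \kappa_{\tau^*_j-1}$ times the relation for $\sigma\setminus\tau^*$ in codimension $r'=r-|\tau^*|+\ell(\tau^*)$, is exactly that strategy made explicit, and your degree, exponent, inequality, and parity checks are all right. The only point needing care, which you already flag, is reading the divisions in $G_\sigma^\pm$ with parts treated as labeled so that the multiplicities $m(\sigma^\bullet)$ come out consistently.
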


\pagebreak

Consider the  exponent of $1+4y$.
By the inequality and the  parity condition \eqref{par2},
$$\frac{r-|\sigma|-g+2d-1}{2}\geq 0$$ 
and the fraction is integral. Hence, the $y$ degree of
the prefactor
$$(1+4y)^{\frac{r-|\sigma|-g+2d-1}{2}}$$
is exactly $\frac{r-|\sigma|-g+2d-1}{2}$.
The $y$ degree of the exponential factor is bounded
from above by the $u$ degree. We conclude
the relation of Theorem 4 is
 {\em trivial} unless
$$r-|\sigma|+\ell(\sigma) \geq d - {\frac{r-|\sigma|-g+2d-1}{2}} 
= -\frac{r-|\sigma|}{2} +\frac{g+1}{2} \ .$$
Rewriting the inequality, we obtain
$$3r \geq g+1 + 3|\sigma|-2\ell(\sigma)$$
which is consistent with the proven freeness of $R^*(\M_g)$
up to (and including) degree $\lfloor \frac{g}{3} \rfloor$.



\vspace{60pt}

$$ $$
$$ $$
$$ $$

\pagebreak

\vspace{10pt}
\noindent X. {\em Another form}
\vspace{10pt}

A subset of the equations of Theorem 4 admits an especially simple
description.
Consider the function
\begin{multline*}
H_{n,m}(u) =  2^{n-2}(2n-5)!! \ \kappa_{m-1} u^{n-1} 
 + 4^{n-1}(n-1)!\ \kappa_m u^n \\
+ \sum_{k=1}^\infty  (6k)(6k+4)\cdots (6k+4(n-1)) c_{k,k}\   
\kappa_{k+m}
u^{k+n} \ .
\end{multline*}

\begin{Proposition} In $R^r(\M_g)$, the relation 
\begin{equation*}
\Big[ 
\exp\left(-\sum_{k=1}^\infty c_{k,k} \kappa_k u^k 
-\sum_{\sigma \neq \emptyset} H_{\ell(\sigma),|\sigma|} 
\frac{\mathbf{p}^\sigma}{|\text{\em Aut}(\sigma)|} \right) 
\Big]_{u^{r-|\sigma|+\ell(\sigma)}\mathbf{p}^\sigma}  = 0 
\end{equation*}
holds when 
$3r \geq g+1 + 3|\sigma|-2\ell(\sigma)$
 and $g\equiv  r + |\sigma|+1$ mod 2.
\end{Proposition}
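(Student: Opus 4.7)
The plan is to derive Proposition~3 from Theorem~4 by specializing to the extremal genus $g_{\mathrm{ext}} = 3r - 3|\sigma| + 2\ell(\sigma) - 1$, in which the $y$-degree of the Theorem~4 relation is saturated, and then propagating to smaller genera via the mod-$2$ genus-shifting of the stable-quotient relations.

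\emph{Extremal reduction.} A parity check gives $g_{\mathrm{ext}} \equiv r + |\sigma| + 1 \pmod 2$, so Theorem~4 applies on $\M_{g_{\mathrm{ext}}}$. With $d = U := r - |\sigma| + \ell(\sigma)$, the prefactor exponent $e = (r - |\sigma| - g_{\mathrm{ext}} + 2d - 1)/2$ evaluates to $0$, so $(1+4y)^e = 1$ and Theorem~4 collapses to
\[
\bigl[u^U y^U \mathbf{p}^\sigma\bigr]\, \exp\!\Bigl(-\gamma^c + \sum_{\sigma \neq \emptyset} G_{\ell(\sigma),|\sigma|}\, \mathbf{p}^{\sigma}/|\text{Aut}(\sigma)|\Bigr) = 0.
\]
From the definitions, every monomial $u^a y^b$ in $\gamma^c$ or in any $G_{n,m}$ satisfies $b \leq a$, an inequality preserved by products. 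The $[u^U y^U]$ extraction therefore picks up only terms with $b = a$ in every factor, so the top part of the exponential equals the exponential of the top part of its argument.

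\emph{Matching with $H_{n,m}$.} The top of $\gamma^c$ (the $j = k$ slice) equals $\sum_{k \ge 1} c_{k,k} \kappa_k (uy)^k$. For $G_{n,m}(u,y)$, the top consists of the $j = n-1$ term from the first sum together with the $j = k + n$ terms from the second. Inserting the closed-form evaluations $b^n_{n-1} = -2^{n-2}(2n-5)!!$, $c^n_{0,n} = 4^{n-1}(n-1)!$, and $c^n_{k,k+n} = (6k)(6k+4)\cdots(6k+4(n-1))\,c_{k,k}$ from the three structural lemmas, each block exactly reassembles into the corresponding block of $-H_{n,m}$, giving
\[
G_{n,m}^{\mathrm{top}}(u,y) = -H_{n,m}(uy).
\]
After the substitution $w = uy$, the extremal relation becomes precisely the Proposition~3 equation at $g = g_{\mathrm{ext}}$.

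\emph{Genus-shift and main obstacle.} Since the final polynomial in $\kappa_1, \kappa_2, \ldots$ has no $\kappa_0$ and no other explicit $g$-dependence, the mod-$2$ genus-shifting property of the stable-quotient relations propagates the vanishing from $g_{\mathrm{ext}}$ to every $g = g_{\mathrm{ext}} - 2\delta$, $\delta \ge 0$, covering the full range $3r \ge g + 1 + 3|\sigma| - 2\ell(\sigma)$. The main obstacle is the combinatorial identification $G_{n,m}^{\mathrm{top}}(u,y) = -H_{n,m}(uy)$ in the second paragraph, which requires carefully assembling all three structural lemmas into a single closed formula.
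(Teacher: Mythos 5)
Your extremal-genus computation is correct and captures the real content of the paper's derivation: with $d=U=r-|\sigma|+\ell(\sigma)$ and $g_{\mathrm{ext}}=3r-3|\sigma|+2\ell(\sigma)-1$ the prefactor exponent vanishes, the bound ($y$-degree $\le$ $u$-degree) forces the $[u^Uy^U]$ extraction to see only the diagonal part of the exponential, and the identification $G^{\mathrm{top}}_{n,m}(u,y)=-H_{n,m}(uy)$ is exactly what Lemmas 3--5 are set up to provide. Up to that point your argument matches the route indicated by the text following Theorem 4.

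The gap is in the last step. The genus-shifting property belongs to the \emph{formal} stable-quotient relations of Theorem 3, i.e.\ to expressions in $\kappa_{-1},\kappa_0,\kappa_1,\dots$ before the evaluation $\kappa_0=2g-2$. The passage from Theorem 3 to Theorem 4 substitutes $\kappa_0=2g-2$ into the prefactor, turning $(1+4y)^{\frac{r-|\sigma|+2d-2}{2}-\frac{\kappa_0}{4}}$ into $(1+4y)^{\frac{r-|\sigma|-g+2d-1}{2}}$; consequently the genus-$(g_{\mathrm{ext}}-2\delta)$ avatar of the relation you used is \emph{not} the same polynomial identity but the one with prefactor $(1+4y)^{\delta}$, whose $[u^Uy^U]$ coefficient mixes the diagonal term with the sub-diagonal coefficients $[u^Uy^{U-1}],\dots,[u^Uy^{U-\delta}]$. (Also, the relation does in general contain $\kappa_0$: the term $2^{n-2}(2n-5)!!\,\kappa_{m-1}u^{n-1}$ of $H_{n,m}$ produces $\kappa_0$ whenever some subpartition equals $(1)$.) The correct way to reach all $g$ with $3r\ge g+1+3|\sigma|-2\ell(\sigma)$ is to stay at fixed $g$ and let $d$ vary: the exponent equals $d-d_0$ with $d_0=\frac{g+|\sigma|+1-r}{2}$ independent of $d$, so Theorem 4 at $d=d_0$ gives $[\exp(\cdots)]_{u^Uy^{d_0}}=0$, and an induction on $d$ using the triangularity of the $y$-expansion of $(1+4y)^{d-d_0}$ gives $[\exp(\cdots)]_{u^Uy^{d}}=0$ for every $d_0\le d\le U$; the case $d=U$, available precisely when $U\ge d_0$, i.e.\ $3r\ge g+1+3|\sigma|-2\ell(\sigma)$, is the diagonal coefficient and hence Proposition 3. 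Your paragraph two then applies verbatim to convert that diagonal coefficient into the stated formula.
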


The main advantage of Proposition 3 is the dependence
on only the function
\begin{equation}\label{njjk}
\sum_{k=1}^\infty c_{k,k} z^k = \log\left( \sum_{k=1}^\infty
\frac{(6k)!}{(2k)!(3k)!} \left(\frac{z}{72}\right)^k \right)\ .
\end{equation}
Proposition 3 only provides finitely many relations
for fixed $g$ and $r$.
In Theorem 5 of Section C.I below, a more
elegant set of relations in $R^r(M_g)$ conjectured by
Faber-Zagier is presented.
In fact, we show Proposition 3 is equivalent to the Faber-Zagier
conjecture. 

\vspace{60pt}

$$ $$
$$ $$
$$ $$

\pagebreak

\vspace{10pt}
\noindent {\bf C. The conjecture of Faber-Zagier}
\vspace{10pt}

\vspace{10pt}
\noindent I. {\em The function $\Psi$}
\vspace{10pt}

A third set of relations is defined as follows
Let 
$$\mathbf{p} = \{\ p_1,p_3,p_4,p_6,p_7,p_9,p_{10}, \ldots\ \}$$
be a variable set indexed by integers not congruent
to $2$ mod 3.
Let
\begin{multline*}
\Psi(t,\mathbf{p}) =
(1+tp_3+t^2p_6+t^3p_9+\ldots) \sum_{i=0}^\infty \frac{(6i)!}{(3i)!(2i)!} t^i
\\ +(p_1+tp_4+t^2p_7+\ldots) 
\sum_{i=0}^\infty \frac{(6i)!}{(3i)!(2i)!} \frac{6i+1}{6i-1} t^i
\end{multline*}
Define the constants $C^r(\sigma)$ by the formula
$$\log(\Psi)= 
\sum_{\sigma}
\sum_{r=0}^\infty C^r(\sigma)\ t^r 
\mathbf{p}^\sigma
\ . $$
Here and below, $\sigma$ denotes a partition which avoids all 
 parts congruent to 2 mod 3.
Let 
$$\gamma= 
\sum_{\sigma}
 \sum_{r=0}^\infty C^r(\sigma)
\ \kappa_r t^r 
\mathbf{p}^\sigma
\ .
$$

Our main result, starting from the stable quotient
relations, is the following final form.

\begin{Theorem} 
In $R^r(\M_g)$, the relation
$$
\big[ \exp(-\gamma) \big]_{t^r \mathbf{p}^\sigma}  = 0$$
holds when
$g-1+|\sigma|< 3r$ and
$g\equiv r+|\sigma|+1 \mod 2$.
\end{Theorem}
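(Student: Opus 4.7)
The plan is to derive Theorem~5 from Proposition~3 by a direct generating--function identification. The Faber--Zagier series $\Psi$ is assembled from the two hypergeometric series
$$A(t)=\sum_{i\geq 0}\frac{(6i)!}{(3i)!(2i)!}\,t^{i} \qquad \text{and} \qquad B(t)=\sum_{i\geq 0}\frac{(6i)!}{(3i)!(2i)!}\frac{6i+1}{6i-1}\,t^{i},$$
and Ionel's third result records the identity $\sum_{k\geq 1}c_{k,k}z^{k}=\log A(z/72)$. Starting from Proposition~3, the pure--$\kappa$ portion of its exponent, $\sum_{k}c_{k,k}\kappa_{k}u^{k}$, therefore matches the $\sigma=\emptyset$ contribution to $\gamma$ in Theorem~5 after a rescaling of $u$ against $t$ (calibrated by $72$ and an appropriate sign). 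This reduces the entire problem to a comparison of the $\mathbf{p}$--dependent parts of the two exponents.

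The main computational step is to identify the polynomial $H_{n,m}(u)$ from Proposition~3 with the Taylor coefficient of $\log\Psi$ read off from the monomial $p_{m}^{\,n}$ (divided by $n!$). The leading terms $2^{n-2}(2n-5)!!\,\kappa_{m-1}u^{n-1}$ and $4^{n-1}(n-1)!\,\kappa_{m}u^{n}$ of $H_{n,m}$ should arise from the $i=0$ contributions of $B$ and $A$ respectively, while the hypergeometric products $(6k)(6k+4)\cdots(6k+4(n-1))$ should emerge from differentiating the factor $(1+tp_{3}+t^{2}p_{6}+\cdots)A(t)$ of $\Psi$ when $m\equiv 0\!\pmod 3$ and from the shifted series $B(t)$ when $m\equiv 1\!\pmod 3$. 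I expect the verification to proceed by induction on $n$ using the recursions for $q_{k,j}$ and $c_{k,j}$ from Section~VII, in the spirit of Ionel's derivation of her closed form. An exponentiation--of--linear--term argument, analogous to Proposition~1, then handles the assembly of higher $p$--monomials into the full $\log\Psi$.

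Finally, I would account for the partitions containing a part $\equiv 2\!\pmod 3$, which are excluded in Theorem~5 but present in Proposition~3. Following the redundancy strategy of Section~V (codified in Proposition~2), I would show that every Proposition~3 relation indexed by such a $\sigma$ is a consequence of the Proposition~3 relations for smaller partitions whose parts all avoid $2\!\pmod 3$, by multiplying the smaller equations by suitable $\kappa$--factors and summing. A separate check compares the two ranges of validity -- Proposition~3's $3r\geq g+1+3|\sigma|-2\ell(\sigma)$ against Theorem~5's $3r>g-1+|\sigma|$ -- and confirms that, over partitions with parts in $\{1,3,4,6,7,\ldots\}$, the resulting relations span the same ideal. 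The principal obstacle is the hypergeometric combinatorics matching $H_{n,m}$ with the Taylor expansion of $\log\Psi$: the appearance of $2^{n-2}(2n-5)!!$ via $\Gamma$--regularization is delicate, and proving that $(6k)(6k+4)\cdots(6k+4(n-1))$ arises as the $n^{\text{th}}$ normalized derivative of $A$ (respectively $B$) will be the crux of the argument.
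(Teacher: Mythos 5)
Your overall strategy---deriving Theorem 5 from Proposition 3 by matching generating functions, anchored by Ionel's identity $\sum_k c_{k,k}z^k=\log A$---is where the paper also starts, and your reduction of the single-part case to the identity $-\tfrac12+z+6z(z\tfrac{d}{dz})\log A=\tfrac12 B/A$ is correct. But the central step you propose, a direct identification of $H_{n,m}$ with the $\mathbf{p}^\sigma$-coefficient of $\log\Psi$, fails for partitions with more than one part, and that is exactly where the real work lies. Setting $C=B/A$, one has $H_{n,m}(z)=2^{-n}z^{n-m}\{z^{m-n}C_n\}_\kappa$, where $C_1=C$ and $C_{i+1}=(12z^2\tfrac{d}{dz}-4iz)C_i$; by the differential equation $12z^2C'=1+4zC-C^2$ each $C_n$ is a polynomial in $C$ and $z$ (e.g.\ $C_2=1-C^2$, $C_3=-8z-2C+2C^3$). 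The Faber--Zagier exponent, by contrast, produces for each $\sigma$ a combination of products $\{z^{a_1}C\}_\kappa\cdots\{z^{a_m}C\}_\kappa$ and higher powers of $C$ inside a single $\{\cdot\}_\kappa$. These do \emph{not} coincide coefficient-by-coefficient: for instance $\SQ_{(111)}=\tfrac43\kappa_1 z\,\FZ_\emptyset+(-\tfrac13-\tfrac{\kappa_0}{2})\FZ_{(1)}+\FZ_{(111)}$. What must actually be proved is that each $\SQ_\sigma$ is a linear combination of the $\FZ_{\widehat\sigma}$ with coefficients in $\Q[\kappa_0,\kappa_1,\ldots]$, triangular in $|\sigma|$ with diagonal entries $1$; the consistency of the higher powers of $C$ rests on the lemma that the generating function $f=1+\sum_{i,j\ge1}\frac{(-1)^{j-1}f_{ij}}{i!(j-1)!}x^iy^j$ of the coefficients in $C_i=\sum_j f_{ij}C^j$ has the form $e^{yg}$ with $g\in\Q[z][[x]]$. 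Your proposal contains no substitute for this step; the vague ``exponentiation-of-linear-term argument'' does not supply it.

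Your treatment of the mod-$3$ condition also runs in the wrong direction. No stable-quotient relations need to be discarded: instead, on the Faber--Zagier side the variables $p_{3k}$ (which multiply $1$ rather than $C$ in $\Psi$) are redundant for the \emph{ideal}, and after deleting them and reindexing $p_{3k+1}\mapsto p_{k+1}$ the $\FZ$ relations become indexed by arbitrary partitions, with part $m$ pairing with $Cz^{m-1}$ exactly as in $\SQ$. Under this reindexing $|\sigma_{\mathrm{FZ}}|=3|\sigma|-2\ell(\sigma)$, so the inequality $3r\ge g+|\sigma_{\mathrm{FZ}}|+1$ and the parity condition translate verbatim into those of Proposition 3; no separate comparison of ``ranges of validity'' or spans is needed. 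Finally, the factors $(6k)(6k+4)\cdots(6k+4(n-1))$ do not arise from a case split on $m\bmod 3$: they come from iterating $x\tfrac{d}{dx}$ on $\Gamma/t$ (the identity $c^n_{k,k+n}=(6k)(6k+4)\cdots(6k+4(n-1))c_{k,k}$), which on the $C$-side is precisely the recursion $C_{i+1}=(12z^2\tfrac{d}{dz}-4iz)C_i$.
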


The relations of Theorem 5 were conjectured earlier by Faber and Zagier
from data and a study of the
Gorenstein quotient of $R^*(\M_g)$.
To the best of our knowledge, a relation in $R^*(\M_g)$ which is
not in the span of the relations of Theorem 5 has not yet been found.
In particular, all relations obtained from Theorem 1 to date
are in the span of Theorem 5 (and conversely).
It is very reasonable to expect the spans of the relations in
Theorem 1 and Theorem 5 exactly coincide.
Whether Theorem 5 exhausts all relations in $R^*(\M_g)$ is
a very interesting question. 

Theorem 5 is much more efficient than Theorem 1 for several
reasons. Theorem 5 only provides finitely many relations
in $R^r(\M_g)$ for fixed $g$ and $r$, and thus may be 
calculated completely. 
When the relations yield a Gorenstein
ring with socle in $R^{g-2}(\M_g)$, no further relations are possible.
However, the relations of Theorem 5 do not always yield such a
Gorenstein ring (failing first in genus 24 as checked by Faber).
For $g<24$, Faber's calculations show
Theorem 5 does provide all relations in $R^*(\M_g)$.
For higher genus $g\geq 24$, either Theorem 5 fails to
provide all the relations in $R^*(\M_g)$ {\em or}
$R^*(\M_g)$ is not Gorenstein.

\vspace{10pt}
\noindent II. {\em Connection to the stable quotient relations}
\vspace{10pt}

Theorem 5 is derived from Proposition 3.
In fact, Proposition 3 is equivalent to Theorem 5. The
derivation is obtained by a triangular transformation
among distinguished generators.
A certain amount of differential algebra is required.

Consider the relation obtained from the
partition $\sigma=(1)$ in Proposition 3 and the Conjecture.
 For convenience, let
\begin{eqnarray*}
A(z)& = &\sum_{i=0}^\infty \frac{(6i)!}{(3i)!(2i)!} 
\left(\frac{z}{72}\right)^i\ , \\
B(z)& = & \sum_{i=0}^\infty \frac{(6i)!}{(3i)!(2i)!} \frac{6i+1}{6i-1} 
\left(\frac{z}{72}\right)^i
\end{eqnarray*}
The conjectures predict
no room for different relations in $R^*(\M_g)$ for $\sigma=(1)$, so
we must have
$$-\frac{1}{2}+ z + 6 z\left(z\frac{d}{dz}\right) \log(A) $$
proportional to $B/A$. We find
the equation
$$-\frac{1}{2}+ z + 6 z\left(z\frac{d}{dz}\right) 
\log(A)= \frac{1}{2}\frac{B}{A}\ $$
holds.
Equivalently,
$$-\frac{1}{2}A + zA + 6 z^2\ \frac{dA}{dz} = \frac{1}{2}B\ $$

More interesting is the partition $\sigma=(11)$.
Here we predict, once the definitions are unwound, that
$$z+ 4z^2 + 36 z^2\left( z\frac{d}{dz}\right)^2 \log(A) 
+ 24 z^2\left(z\frac{d}{dz}\right)\log A$$
is a linear combination of $1$ and $B^2/A^2$.
 We find
the equation
$$z+ 4z^2 + 36 z^2\left( z\frac{d}{dz}\right)^2 \log(A) 
+ 24 z^2\left(z\frac{d}{dz}\right)\log A= 
\frac{1}{4} - \frac{1}{4} \frac{B^2}{A^2}
$$
holds.

In fact, the main hypergeometric differential equation satisfied
by the function $A$ is
$$36 z^2 \frac{d^2}{dz^2} A + (72z-6) \frac{d}{dz} A + 5 A = 0 \ .$$
In Section D below, further details describing the use of such differential
equations 
to prove Theorem 5 from
Proposition 3 are presented.

\vspace{10pt}
\noindent III. {\em Functions}
\vspace{10pt}

While the functions $A(z)$ and $B(z)$ of Section II have radius
of convergence 0, an additional double factorial in the
denominator yields convergent classical series,

\begin{eqnarray*}
\frac{3}{2t}\sin \left(\frac{2}{3} \sin^{-1}(t)\right)& 
= &\sum_{i=0}^\infty \frac{(6i)!}{(3i)!(2i)!(2i+1)!!} 
\left(\frac{t^2}{216}\right)^i\ , \\
-\frac{3}{4t}\sin \left(\frac{4}{3} \sin^{-1}(t)\right)
& = & \sum_{i=0}^\infty \frac{(6i)!}{(3i)!(2i)!(2i+1)!!} \frac{6i+1}{6i-1} 
\left(\frac{t^2}{216}\right)^i\ .
\end{eqnarray*}

\vspace{10pt}
\noindent IV. {\em Remarks}
\vspace{10pt}

Stable quotients relations in $R^*(\M_g)$ have several advantages
over other geometric constructions. We have already seen here
the possibility of exact evaluation. Another advantage we have not
explored in these lectures is the extension of the stable
quotients relations over $\overline{\M}_g$. The boundary
terms of the stable quotients relations are tautological.
A study of the relations among the $\kappa$ classes in the
tautological ring of the moduli space of curves of compact
type $\M_{g,n}^c$ has been undertaken in \cite{kap1,kap2}. For example,
the Gorenstein predictions for $\kappa$ classes are proven there
if $n\geq 1$.
But even for $\M_g$, the extension of the stable quotients
relations over $\overline{\M}_g$ has significant consequences.
Since we know the stable quotients relations are all
relations in $R^*(\M_g)$ for $g<24$, the following result holds.

\begin{Proposition} For $g<24$, we have a right exact sequence
$$R^*(\partial \overline{\M}_g) \rightarrow R^*(\overline{\M}_g)
\rightarrow R^*(\M_g) \rightarrow  0 \ . $$
\end{Proposition}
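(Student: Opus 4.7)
\emph{Proof proposal.} The plan is to combine two ingredients: the completeness statement that for $g < 24$ every relation in $R^*(\M_g)$ lies in the span of the stable quotient relations (Theorem 5), and the extension of those relations over $\overline{\M}_g$ with tautological boundary corrections mentioned in the preceding remark. Surjectivity of the restriction map $R^*(\overline{\M}_g) \to R^*(\M_g)$ is immediate from the definition of the tautological ring, and the composite through the boundary is zero because boundary strata are disjoint from $\M_g$. The substantive content to establish is that any $\alpha \in R^r(\overline{\M}_g)$ restricting to zero on $\M_g$ is a pushforward from $\partial \overline{\M}_g$.

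First I would decompose $\alpha = P(\kappa_1, \kappa_2, \ldots) + Q$ with $Q$ supported on $\partial \overline{\M}_g$, using that $R^*(\overline{\M}_g)$ is generated by $\kappa$-monomials together with pushforwards from boundary strata. The hypothesis $\alpha|_{\M_g} = 0$ then forces the $\kappa$-polynomial $P$ to vanish in $R^r(\M_g)$. By the completeness assumption in the range $g < 24$, $P$ is a $\mathbb{Q}$-linear combination of stable quotient relations. Granted the lifting statement below, each such relation lifts to an identity on $\overline{\M}_g$ of the form (explicit $\kappa$-polynomial) $=$ (tautological class pushed forward from $\partial \overline{\M}_g$); subtracting the corresponding linear combination of lifts from $\alpha$ exhibits $\alpha$ as an element of the image of $R^*(\partial \overline{\M}_g) \to R^*(\overline{\M}_g)$, which is the desired exactness.

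The decisive step, and the main obstacle, is the extension assertion: that the stable quotient relations used to prove Theorem 5 genuinely extend over $\overline{\M}_g$ with boundary terms that are themselves tautological. This requires working with the moduli space of stable quotients over the entire boundary of $\overline{\M}_g$, verifying properness of the forgetful morphism, and checking that the virtual pushforwards of the relevant Chern class vanishings land in the tautological subring of $R^*(\partial \overline{\M}_g)$. Once this is in hand, the remainder of the argument is a routine bookkeeping step, translating $P$ into a combination of such lifted identities and absorbing the resulting boundary contributions into $Q$.
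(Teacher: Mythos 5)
Your proposal is correct and follows essentially the same route as the paper: the paper deduces the proposition from exactly the two inputs you isolate, namely that the stable quotient relations extend over $\overline{\M}_g$ with tautological boundary terms and that for $g<24$ they account for all relations in $R^*(\M_g)$, with the decomposition of a class in the kernel into a $\kappa$-polynomial plus a boundary term being the implicit bookkeeping step. You also correctly identify the extension over $\overline{\M}_g$ as the substantive input, which the paper asserts without proof and defers to the detailed treatment in \cite{PP}.
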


Speculations about such right exactness for tautological rings
were advanced in \cite{FPrel}.

\vspace{20pt}
\noindent {\bf D. The equivalence}
\vspace{10pt}

\vspace{10pt}
\noindent I. {\em Notation}
\vspace{10pt}

The relations in Theorem 5 and Proposition 3 have a similar flavor.
 We start with formal series related  to
\[
A(z) = \sum_{i=0}^\infty \frac{(6i)!}{(3i)!(2i)!}\left(\frac{z}{72}\right)^i,
\]
we insert classes $\kappa_r$, we 
exponentiate, and we extract  coefficients to obtain 
relations among the $\kappa$ classes. 
In order to make the similarities clearer, 
we will introduce additional notation. 

If $F$ is a formal power series in $z$,
\[
F = \sum_{r=0}^\infty c_rz^r
\]
with coefficients in a ring $R$, let
\[
\{F\}_\kappa = \sum_{r=0}^\infty c_r\kappa_rz^r
\]
be the series with $\kappa$-classes inserted.

Let $A$ be as above, and let $B$ be the function defined in C.II.
Let
$$C = \frac{B}{A}\ ,$$ 
and let 
$$E = \exp(-\{\log(A)\}_\kappa) = \exp\left(-\sum_{k=1}^\infty c_{k,k}\kappa_kz^k\right).$$
We will rewrite the relations of Theorem 5 and Proposition 3 in terms of $C$ and $E$.
The equivalence between the two will rely on properties of the differential equations 
satisfied by $C$.

\vspace{10pt}
\noindent II. {\em Rewriting the relations}
\vspace{10pt}

The relations of Theorem 5 conjectured by Faber-Zagier
are straightforward to rewrite using the above notation:
\begin{multline}\label{FZ0}
\Bigg[E\cdot\exp\Big(-\Big\{\log(1+p_3z+p_6z^2+\cdots\\
+C(p_1+p_4z+p_7z^2+\cdots))\Big\}_\kappa\Big)\Bigg]_{z^rp^\sigma} = 0
\end{multline}
for $3r \ge g+|\sigma|+1$ and $3r\equiv g+|\sigma|+1$ mod $2$.
We call the above relations $\FZ$.

The stable quotient relations of Proposition 3 are a bit more complicated 
to rewrite in terms of $C$ and $E$. Let
\begin{multline*}
2^{-n}C_n = 2^{n-2}(2n-5)!!z^{n-1} + 4^{n-1}(n-1)!z^n \\
+ \sum_{k=1}^\infty (6k)(6k+4)\cdots(6k+4(n-1))c_{k,k}z^{k+n}.
\end{multline*}
We see
$$H_{n,m}(z) = 2^{-n}z^{n-m}\{z^{m-n}C_n\}_\kappa.$$
 The series $C_n$ satisfy
\[
C_1 = C, \ \ \ \ C_{i+1} = \left(12z^2\frac{d}{dz}-4iz\right)C_i.
\]
Since $C$ satisfies the differential equation 
$$12z^2\frac{dC}{dz} = 1 + 4zC - C^2,$$
 each $C_n$ can be expressed as a polynomial in $C$ and $z$:
\[
C_1 = C, \ \ C_2 = 1-C^2,\ \  C_3 = -8z-2C+2C^3, \ldots, \ .
\]

Proposition 3 can then
be rewritten as follows (after an appropriate change of variables):
\begin{equation}\label{SQ}
\left[E\cdot\exp\left(-\sum_{\sigma\ne\emptyset}\{z^{|\sigma|-\ell(\sigma)}C_{\ell(\sigma)}\}_\kappa\frac{p^\sigma}{|\Aut(\sigma)|}\right)\right]_{z^rp^\sigma} = 0
\end{equation}
for $3r \ge g+3|\sigma|-2\ell(\sigma)+1$ and $3r \equiv g+3|\sigma|-2\ell(\sigma)+1$ mod $2$.
We call the stable quotients relations $\SQ$.

The $\FZ$ and $\SQ$
relations  now look much more similar, but the relations in (\ref{FZ0}) are 
indexed by partitions with no parts of size $2$ mod $3$ and
satisfy a slightly different inequality. 
The indexing differences can be erased 
by noting the variables $p_{3k}$ are actually not necessary in (\ref{FZ0}) 
if we are just interested in the \emph{ideal} generated by a set of relations 
(rather than the linear span). If we remove the variables $p_{3k}$
and reindex the others, 
we obtain the following equivalent form of the $\FZ$ relations:
\begin{equation}\label{FZ}
\Big[E\cdot\exp\big(-\big\{\log(1+C(p_1+p_2z+p_3z^2+\cdots))\big\}_\kappa\big)\Big]_{z^rp^\sigma} = 0
\end{equation}
for $3r \ge g+3|\sigma|-2\ell(\sigma)+1$ and $3r \equiv g+3|\sigma|-2\ell(\sigma)+1$ mod $2$.

\vspace{10pt}
\noindent II. {\em Comparing the relations}
\vspace{10pt}

We now explain how to write the $\SQ$ relations (\ref{SQ}) as linear combinations of the 
$\FZ$ relations (\ref{FZ}) with coefficients in $\Q[\kappa_0,\kappa_1,\kappa_2,\ldots]$.
In fact,
the associated matrix will be triangular with diagonal entries equal to $1$.  

We start with further notation.
For a partition $\sigma$, let
\[
\FZ_\sigma = \left[\exp\left(-\left\{\log(1+C(p_1+p_2z+p_3z^2+\cdots))\right\}_\kappa\right)\right]_{p^\sigma}
\]
and
\[
\SQ_\sigma = \left[\exp\left(-\sum_{\sigma\ne\emptyset}\{z^{|\sigma|-\ell(\sigma)}C_{\ell(\sigma)}\}_\kappa\frac{p^\sigma}{|\Aut(\sigma)|}\right)\right]_{p^\sigma}
\]
be power series in $z$ with coefficients that are polynomials in the $\kappa$ classes. 
The relations themselves are given by $[E\cdot\SQ_\sigma]_{z^r}$ and $[E\cdot\FZ_\sigma]_{z^r}$.

For each $\sigma$, we can write $\SQ_\sigma$ in terms of the $\FZ_\sigma$. For example,
\begin{align*}
\SQ_{(111)} &= -\frac{1}{6}\{C_3\}_\kappa + \frac{1}{2}\{C_2\}_\kappa\{C_1\}_\kappa -\frac{1}{6}\{C_1\}_\kappa^3 \\
&= \frac{4}{3}\kappa_1z + \frac{1}{3}\{C\}_\kappa - \frac{1}{3}\{C^3\}_\kappa + \frac{1}{2}(\kappa_0 - \{C^2\}_\kappa)\{C\}_\kappa - \frac{1}{6}\{C\}_\kappa^3 \\
&= \left(\frac{4}{3}\kappa_1z\right) + \left(\left(\frac{1}{3} + \frac{\kappa_0}{2}\right)\{C\}_\kappa\right)\\
& \ \ \ \ \ \ \ \ \ \ \ \  + \left(-\frac{1}{3}\{C^3\}_\kappa -\frac{1}{2}\{C^2\}_\kappa\{C\}_\kappa -\frac{1}{6}\{C\}_\kappa^3\right) \\
&= \frac{4}{3}\kappa_1z\FZ_\emptyset + \left(-\frac{1}{3} - \frac{\kappa_0}{2}\right)\FZ_{(1)} + \FZ_{(111)}.
\end{align*}
We then 
obtain a corresponding linear relation between the relations themselves:
\[
[E\cdot\SQ_{(111)}]_{z^r} = \frac{4}{3}\kappa_1[E\cdot\FZ_{\emptyset}]_{z^{r-1}} + \left(-\frac{1}{3} - \frac{\kappa_0}{2}\right)[E\cdot\FZ_{(1)}]_{z^r} + [E\cdot\FZ_{(111)}]_{z^r}.
\]

Constructing such linear combinations in general
is not hard. When expanded in terms of $C$ as in the above example, 
$\FZ_\sigma$ always contains exactly one term of the form 
\begin{equation}\label{g66h}
\{z^{a_1}C\}_\kappa\{z^{a_2}C\}_\kappa\cdots\{z^{a_m}C\}_\kappa\ . 
\end{equation}
All the other terms involve higher powers of $C$. 
If we expand $\SQ_\sigma$ in terms of $C$, 
we can look at the terms of the form \eqref{g66h} which appear to determine 
how to write the $\SQ_\sigma$ as a linear combination of the $\FZ_{\widehat{\sigma}}$.

We must check the terms involving higher powers of $C$ also match up.
The matching 
 amounts to proving an identity between the coefficients of $C_i$ when expressed a 
polynomial in $C$.
Define polynomials $f_{ij}\in\Z[z]$ by
\[
C_i = \sum_{j=0}^if_{ij}C^j,
\]
and let
\[
f = 1+\sum_{i,j\ge 1}\frac{(-1)^{j-1}f_{ij}}{i!(j-1)!}x^iy^j.
\]
\begin{Lemma}
There exists a power series $g\in\Q[z][[x]]$ such that $f = e^{yg}$.
\end{Lemma}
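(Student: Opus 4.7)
The plan is to identify $g$ explicitly and then verify $f = e^{yg}$ using a single generating function in which $x$, $y$, $z$, and the auxiliary variable $C$ all appear naturally. Since $f(x,0)=1$, any identity $f=e^{yg}$ forces $g = \partial_y f\bigl|_{y=0} = \sum_{i\ge 1} f_{i,1}(z)\,x^i/i!$. Setting $\phi_j(x,z) := \sum_{i\ge 1} f_{i,j}(z)\,x^i/i!$ for $j\ge 0$, the lemma is equivalent to the identities
$$\phi_j \ =\ \frac{(-1)^{j-1}}{j}\,\phi_1^j \qquad (j\ge 1),$$
i.e.\ to the single factorization
$$\mathcal{H}(x,z,C)\ :=\ \sum_{i\ge 1}\frac{C_i(z,C)}{i!}\,x^i\ =\ \phi_0(x,z)\ +\ \log\bigl(1+C\,\phi_1(x,z)\bigr).$$

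First I would derive a PDE for $\mathcal{H}$. Viewing each $C_i$ as a polynomial in the two independent variables $z$ and $C$, the total $z$-derivative appearing in the recursion $C_{i+1} = (12z^2 \tfrac{d}{dz} - 4iz)\,C_i$ translates, via the defining ODE $12z^2\,dC/dz = 1+4zC-C^2$, into the partial-differential operator $12z^2\partial_z + (1+4zC-C^2)\partial_C$. Summing the recursion against $x^i/i!$ and using $\sum_{i\ge 1} i\,C_i\,x^i/i! = x\,\partial_x \mathcal{H}$ gives
$$(1+4zx)\,\partial_x \mathcal{H}\ =\ C\ +\ 12z^2\,\partial_z \mathcal{H}\ +\ (1+4zC-C^2)\,\partial_C \mathcal{H}.$$
This linear PDE, together with the initial condition $\mathcal{H}|_{x=0}=0$, determines $\mathcal{H}$ uniquely as a power series in $x$.

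Next I would introduce $\tilde{\phi}_0,\,\tilde{\phi}_1 \in \Q[z][[x]]$ as the unique solutions of
\begin{align*}
(1+4zx)\,\partial_x \tilde{\phi}_1 - 12z^2\,\partial_z \tilde{\phi}_1 &\ =\ 1 + 4z\tilde{\phi}_1 - \tilde{\phi}_1^2, \quad \tilde{\phi}_1|_{x=0}=0, \\
(1+4zx)\,\partial_x \tilde{\phi}_0 - 12z^2\,\partial_z \tilde{\phi}_0 &\ =\ \tilde{\phi}_1, \quad \tilde{\phi}_0|_{x=0}=0,
\end{align*}
and set $\tilde{\mathcal{H}} := \tilde{\phi}_0 + \log(1+C\tilde{\phi}_1)$. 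A direct substitution shows $\tilde{\mathcal{H}}$ satisfies the same PDE and initial condition as $\mathcal{H}$: after clearing the denominator $1+C\tilde{\phi}_1$ and collecting powers of $C$, the $C^0$ and $C^1$ coefficients reproduce the two defining equations for $\tilde{\phi}_0,\tilde{\phi}_1$, while the $C^2$ contribution cancels automatically because of the precise Riccati shape of the equation for $\tilde{\phi}_1$. By uniqueness $\mathcal{H} = \tilde{\mathcal{H}}$; matching the $C^0$ and $C^1$ coefficients on both sides identifies $\tilde{\phi}_0=\phi_0$ and $\tilde{\phi}_1=\phi_1$, proving the factorization and hence $f = \exp(y\phi_1)$ with $g = \phi_1 \in \Q[z][[x]]$.

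The main obstacle is the PDE verification for $\tilde{\mathcal{H}}$ in the final step. Although purely mechanical, it is the point where the specific coefficient $-4iz$ in the recursion and the Riccati equation $12z^2\,dC/dz = 1+4zC-C^2$ must conspire with the logarithmic ansatz; the miraculous cancellation of the $C^2$ terms is precisely what forces the identity into the clean exponential form $e^{yg}$ rather than something more complicated, and seeing this alignment is the conceptual heart of the lemma.
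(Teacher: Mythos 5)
Your proof is correct, and it follows the route the paper indicates: the paper offers no written proof beyond the parenthetical remark that the Lemma follows ``in straightforward fashion using the differential equation satisfied by $f$,'' and your argument is exactly a fleshed-out version of that plan, packaging the recursion $C_{i+1}=(12z^2\tfrac{d}{dz}-4iz)C_i$ and the Riccati equation for $C$ into a first-order PDE for the generating function $\sum_i C_i x^i/i!$ and concluding by uniqueness of its power-series solution. The verification that the logarithmic ansatz satisfies the PDE (including the cancellation of the $C^2$ terms) checks out, so the write-up supplies precisely the details the paper omits.
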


The Lemma (which can be proven in straightforward fashion using the differential equation satisfied by $f$) 
is the precise consistency statement needed to express the $\SQ$ relations as linear combinations of the $\FZ$ relations. 
The associated matrix is triangular with respect to the partial ordering of partitions by size, 
and the diagonal entries are easily computed to be equal to $1$. Hence, the matrix
 is invertible. 
We conclude the $\SQ$ relations are equivalent to the $\FZ$ relations.


\begin{thebibliography}{[(III)]}
\bibitem {Faber}
C.~Faber, 
\newblock{\em A conjectural description of the tautological ring of the moduli s
pace of curves}, Moduli of curves and abelian varieties,  109--129, Aspects Math., 
Vieweg, Braunschweig, 1999.


\bibitem{FPrel}
C. Faber and R. Pandharipande, {\em Relative maps and tautological
classes}, JEMS {\bf 7} (2005), 13--49.


\bibitem{Ion} E. Ionel, {\em Relations in the tautological ring
of $\M_g$}, Duke Math. J. {\bf 129} (2005), 157--186.


\bibitem{MOP} 
A.~Marian, D. ~Oprea, and R.~Pandharipande, \newblock{\em 
The moduli space of stable quotients}, arXiv:0904.2992.

\bibitem{kap1} R. Pandharipande, {\em The $\kappa$ ring of the
moduli of curves of compact type: I}, arXiv:0906.2657.

\bibitem{kap2} R. Pandharipande, {\em The $\kappa$ ring of the
moduli of curves of compact type: II}, arXiv:0906.2658.

\bibitem{PP} R. Pandharipande and A. Pixton, {\em Stable quotients
and the Faber-Zagier relations}, in preparation.

\bibitem{zhu} S. Zhu, {\em Note on the relations in the
tautological ring of $\M_g$}, preprint 2010.


\end{thebibliography}
\end{document}